\newcolumntype{P}[1]{>{\centering\arraybackslash}p{#1}}
\title{Characteristic properties of $\sigma$-A-nuclei of a quasigroup}
\author[Dimpy Chauhan, Indivar Gupta, Rashmi Verma]
{\textbf{Dimpy Chauhan$^{1,\ast}$}, \textbf{Indivar Gupta$^2$}, \textbf{Rashmi Verma$^3$}}
\address{$^1$ Department of Mathematics, University of Delhi, Delhi-110007, India} 
\address{$^2$ SAG, Metcalfe house, DRDO, Delhi-110054, India} 
\address{$^3$ Mata Sundri College for Women, University of Delhi, Delhi-110002 India}
\thanks{$^\ast$Corresponding Author}
\email{dimpychauhan21jan@gmail.com (D.Chauhan)}
\email{indivar\_gupta@yahoo.com (I.Gupta)}
\email{rashmiv710@gmail.com (R.Verma)}
\keywords{Quasigroup, parastrophy, autostrophy, A-nuclei, $\sigma$-A-nuclei, inverse quasigroups.}
\subjclass[2010]{20N05}
\theoremstyle{definition}
\newtheorem{definition}{Definition}
\theoremstyle{theorem}
\newtheorem{theorem}{Theorem}
\newtheorem{corollary}{Corollary}[theorem]
\newtheorem{remark}{Remark}
\begin{document}
	\maketitle
	
	\begin{abstract}
	In this paper, we investigate the properties of $\sigma$-A-nuclei of a quasigroup including relations between them and relations between their respective component sets, where $\sigma \in S_3$. We also find connections between components of $\sigma$-A-nuclei of a quasigroup and components of $\sigma$-A-nuclei of the isostrophic images. 
	Further, we investigate the properties of various inverse quasigroups using the derived connections. These properties will not only make the study of $\sigma$-A-nuclei of a quasigroup simple but also reduce the time required in the computation of $\sigma$-A-nuclei of a quasigroup for different values of $\sigma$. 
	\end{abstract}
	
\section{Introduction}
Garrison \cite{Garrison} introduced the notion of quasigroup nuclei in 1940. It measures how far a quasigroup is from a group by measuring the near-associativity of the quasigroup. It has been shown in \cite{Pflugfelder} that if a quasigroup $(Q,\cdot)$ has a non-trivial left/ right/ middle nucleus then the quasigroup $(Q,\cdot)$ has a left identity/ a right identity/ an identity element. A detailed historical background of A-nuclei of a quasigroup has been discussed by V.A. Shcherbacov in \cite{Shcherbacov&2011,Shcherbacov&2017}. Belousov \cite{Belousov&58} generalised Garrison's nuclei to proper quasigroups by introducing the concept of left, right, and middle regular permutations, which can be used to measure the quasigroups' near-associativity in the absence of a left or right unit.
Several authors have investigated connections between quasigroup nuclei and groups of regular permutations of quasigroups \cite{Belousov&58, Belousov&67, Belousov, Kepka}.
The A-nuclei (autotopy nuclei) of a quasigroup are then identified as groups of left, right, and middle regular permutations, and have been studied extensively by Belousov, Kepka, Keedwell, Anthony, Shcherbacov, etc \cite{Belousov&58, Kepka, Keedwell&Anthony, Shcherbacov&2011}.
It is worth noting that if the quasigroup is a loop, then the non-trivial components of the loop's left/right A-nuclei coincide with left/right translations by elements of the left/right nuclei, and hence the notion of A-nuclei may be considered as a generalization of a quasigroup's nuclei.
The connections between components of the A-nuclei of a quasigroup and components of the A-nuclei of its isostrophic images have been discussed in \cite{Shcherbacov&2017}. The reader may refer \cite{Shcherbacov&2011,Shcherbacov&2017} for a detailed survey on A-nuclei of a quasigroup.

The concept of left, right and middle $\sigma$-A-nuclei of a quasigroup, as a generalization of left, right and middle A-nuclei respectively, have been introduced in \cite{Chauhan22}. 
In this paper we characterize the inverse sets of $\sigma$-A nuclei and their respective components. Further, characterization of the products of $\sigma$-A-nuclei and $\tau$-A-nuclei
of a quasigroup and their respective component sets have been discussed. We also find connections between $\sigma$-A-nuclei of a quasigroup and $\sigma$-A-nuclei of its isostrophic images, as well as between their respective component sets. Further, we use the connections of $\sigma$-A-nucleus of a quasigroup and its isostrophic images to investigate properties of $\sigma$-A-nuclei of various inverse quasigroups.

The paper has the following structure: Section 2 presents some basic definitions, notations and results required for the subsequent sections. In Section 3, we define $\sigma$-A-nuclei of a quasigroup and investigate some of their properties. Further, we derive connections between components of the $\sigma$-A-nuclei of a quasigroup and its isostrophic images. Section 4 discusses properties of various inverse quasigroups. Conclusions are finally drawn in Section 5. 

\section{Preliminaries}

In this section, we present some definitions and notations required for our study in this paper \cite{Belousov, Pflugfelder, Shcherbacov&2003, Shcherbacov&2009, Shcherbacov&2011, Shcherbacov&2017}.

\begin{definition} \label{qg}
	A groupoid $(Q,\ast)$ is called a \textit{quasigroup} if there exist unique solutions $x,y \in Q$ of the equations $x\ast a =b $ and $a\ast y=b $ for all ordered pair $(a,b)\in Q^2$. 
\end{definition}

In other words, a groupoid $(Q,\ast)$ is called a quasigroup if in the equality $x \ast y=z$, knowing any two elements from $x,y,z$ uniquely specifies the remaining one element.

In view of the above definition, given a quasigroup $(Q,\ast)$ it is possible to associate five other operators $\ast^{(1\,2)}, \ast^{(1\,3)}, \ast^{(2\,3)}, \ast^{(1\,2\,3)}$ and $\ast^{(1\,3\,2)}$, known as \textit{parastrophes} of quasigroup $(Q,\ast)$ as follows:\\
$x \ast y = z
\iff y \ast^{(1\,2)} x =z 
\iff z \ast ^{(1\,3)} y= x 
\iff x \ast^{(2\,3)} z = y 
\iff y \ast^{(1\,2\,3)} z = x\\
\iff z \ast^{(1\,3\,2)}x=y. $

Note that the quasigroup $(Q,\ast^{\tau})$ is known as $\tau$-parastrophe or $\tau$-parastrophic image of quasigroup $(Q,\ast)$, where $\tau \in S_3$.

%

Let $(Q,\ast)$ be a groupoid and let $c$ be a fixed element of $Q$. The maps $L_c:Q \rightarrow Q$ and $R_c:Q \rightarrow Q$ defined as $L_c x = c\ast x$ and $R_c x = x\ast c$, for all $x\in Q$, are respectively called the \textit{left and the right translations}.

If $(Q,\ast)$ is a quasigroup then it is possible to define third kind of translation, known as \textit{middle translation}, $P_c:Q \rightarrow Q$ defined as $x\ast P_c x=c$, for all $x\in Q$.

In terms of translation maps Definition \ref{qg} can also be written as:\\
A groupoid $(Q,\ast)$ is called a quasigroup if the left and right translations $L_c$ and $R_c$ are bijective maps for all $c\in Q$.

%
%
%
%
%

\begin{definition}
	A groupoid $(G,\circ)$ is an \textit{isotopic image} or \textit{isotope} of a groupoid $(G,\ast)$, if there exist permutations $\alpha_1,\alpha_2,\alpha_3$ of the set $G$ such that 
	\begin{equation} \label{1}
	x \circ y=\alpha_3^{-1}(\alpha_1 x \ast \alpha_2 y)
	\end{equation}
	for all $x,y \in G$. 
\end{definition}

We can also write the equality (\ref{1}) as $(G,\circ)=(G,\ast)R$, where the triplet $R=(\alpha_1,\alpha_2,\alpha_3)$ is called an isotopism or isotopy of the groupoid $(G,\ast)$. It can easily be seen that an isotopic image of a quasigroup is also a quasigroup.

If the two operators $\circ$ and $\ast$ are equal then triplet $R$ is called an \textit{autotopism} or \textit{autotopy} of binary groupoid $(G,\ast)$. Let $Avt(G,\ast)$ denotes the set of all autotopies of a groupoid $(G,\ast)$. It can easily be seen that $Avt(G,\ast)$ forms a group with respect to usual component-wise multiplications of autotopies.


If $\tau \in S_3$ and $R=(\alpha_1,\alpha_2,\alpha_3)$ is an isotopy of a binary groupoid $(Q,\ast)$, then the action of $\tau$ on the triplet $R$ denoted by $R^{\tau}$ shall be defined as $R^{\tau}=(\alpha_{\tau^{-1}1},\alpha_{\tau^{-1}2},\alpha_{\tau^{-1}3})$.  


\begin{definition} \label{isos}
	A quasigroup $(Q,B)$ is an \textit{isostrophic image} or \textit{isostrophe} of $(Q,A)$, if there exists a collection of permutations $(\sigma,(\alpha_1,\alpha_2,\alpha_3))$ =$(\sigma, R)$, where $\sigma \in S_3$ and $R = (\alpha_1,\alpha_2,\alpha_3)$ is triplet of permutations $\alpha_1,\alpha_2,\alpha_3$ of the set $Q$ such that
	\begin{equation} \label{2}
	B(x_1,x_2)=A(x_1,x_2)(\sigma,R)=\alpha_3^{-1}A(\alpha_1 x_{\sigma^{-1}1},\alpha_2 x_{\sigma^{-1}2}) 
	\end{equation}
	for all $x_1,x_2 \in Q$.
\end{definition}

The tuple $(\sigma, R)$ is known as \textit{isostrophism} or \textit{isostrophy} of the quasigroup $(Q,A)$. We can rewrite (\ref{2}) as $B=(A^\sigma)R$. We shall call $\alpha_i$, the $i^\text{th}$ component of the isostrophism $(\sigma,R)$, for $i=1,2,3$.
Definition \ref{isos} can also be written as:\\
An isostrophic image of a quasigroup is defined as an isotopic image of its parastrophe.

Let $(\sigma,R)$ and $(\tau,S)$ be isostrophisms of a quasigroup $(Q,A)$. Then their multiplication is defined as follows:
\begin{equation} \label{aus}
(\sigma,R)(\tau,S)=(\sigma\tau, R^{\tau}S)
\end{equation}
where $A^{\sigma \tau}=(A^\sigma)^\tau$ and $(x_1,x_2,x_3)(R^\tau S)=((x_1,x_2,x_3)R^\tau) S$, for all quasigroup triplets $(x_1,x_2,x_3)$.

It may be noted that this multiplication has been defined in different way by Belousov, Lyakh, Keedwell and Shcherbacov \cite{Belousov&72, Lyakh, Keedwell&Shcherbacov&2004}.




The inverse of an isostrophism $(\sigma,R)$ is:
\begin{equation}
(\sigma,R)^{-1}=(\sigma^{-1},(R^{-1})^{\sigma^{-1}})=(\sigma^{-1},(\alpha^{-1}_{\sigma 1}, \alpha^{-1}_{\sigma 2}, \alpha^{-1}_{\sigma 2}))
\end{equation}

If the binary operations $A$ and $B$ in Definition \ref{isos} are equal then the tuple $(\sigma,R)$ is called an \textit{autostrophism}  or \textit{autostrophy} of the quasigroup $(Q,A)$. Let $Aus(Q,A)$ denotes the set of all autostrophisms of quasigroup $(Q,A)$. $Aus(Q,A)$ forms a group under the multiplication operation defined in (\ref{aus}).


\begin{theorem} \cite{Belousov} \label{thm1}
	If quasigroup $(Q,\circ)$ is an isostrophic image of quasigroup $(Q,\ast)$ with an isostrophy $\theta$, i.e., $(Q,\circ)=(Q,\ast)\theta$, then $Aus(Q,\circ)=\theta^{-1}Aus(Q,\ast)\theta$.
\end{theorem}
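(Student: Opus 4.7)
The plan is to prove the equality by double inclusion, exploiting that the multiplication of isostrophisms defined in (\ref{aus}) gives a group action on the set of quasigroup operations on $Q$. The key technical input is the associativity of this action, i.e., that $((Q,\ast)\theta_1)\theta_2=(Q,\ast)(\theta_1\theta_2)$ for any isostrophisms $\theta_1,\theta_2$; this is a bookkeeping verification using the rule $(\sigma,R)(\tau,S)=(\sigma\tau,R^\tau S)$ together with the identity $(A^\sigma)^\tau=A^{\sigma\tau}$ and the compatibility of the $\tau$-action on triplets with composition, both of which are built into the definition (\ref{aus}). I would carry out this verification first, at least in outline, since everything else rests on it.

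Given associativity, the forward inclusion $Aus(Q,\circ)\subseteq \theta^{-1}Aus(Q,\ast)\theta$ proceeds as follows. Take $\phi\in Aus(Q,\circ)$, so $(Q,\circ)\phi=(Q,\circ)$. Substituting the hypothesis $(Q,\circ)=(Q,\ast)\theta$ on both sides and using associativity gives $(Q,\ast)(\theta\phi)=(Q,\ast)\theta$. Right-multiplying by $\theta^{-1}$ (and using that $\theta\theta^{-1}$ acts as the identity isostrophism, which follows from the explicit formula for $(\sigma,R)^{-1}$ in the excerpt) yields $(Q,\ast)(\theta\phi\theta^{-1})=(Q,\ast)$. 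Hence $\theta\phi\theta^{-1}\in Aus(Q,\ast)$, so $\phi\in \theta^{-1}Aus(Q,\ast)\theta$.

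For the reverse inclusion, take $\psi\in Aus(Q,\ast)$ and compute
\[
(Q,\circ)(\theta^{-1}\psi\theta)
=(Q,\ast)\theta\theta^{-1}\psi\theta
=(Q,\ast)\psi\theta
=(Q,\ast)\theta
=(Q,\circ),
\]
using associativity at each step and $(Q,\ast)\psi=(Q,\ast)$. Thus $\theta^{-1}\psi\theta\in Aus(Q,\circ)$, completing the proof.

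The main obstacle is purely the associativity/compatibility check for the isostrophism multiplication: one must confirm that the $\tau$-action on triplets (used in forming $R^\tau$) cooperates with composition so that $((\sigma,R)(\tau,S))(\rho,T)=(\sigma,R)((\tau,S)(\rho,T))$ holds as isostrophies, and in particular when applied to a quasigroup operation. Once this is in hand, the theorem is formally identical to the standard fact that stabilizers of points in the same orbit of a group action are conjugate.
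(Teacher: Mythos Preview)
The paper does not give its own proof of this statement; Theorem~\ref{thm1} is quoted with attribution to \cite{Belousov} and used as a known background result throughout Section~\ref{sec3}. There is therefore no proof in the paper to compare your proposal against.

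That said, your argument is correct and is the standard one. The set of isostrophisms forms a group under the multiplication (\ref{aus}), this group acts on the set of quasigroup operations on $Q$ via $(Q,\ast)\mapsto (Q,\ast)\theta$, and $Aus(Q,\ast)$ is by definition the stabilizer of the operation $\ast$ under this action. Your double-inclusion computation is then exactly the classical fact that stabilizers of two points in the same orbit are conjugate by any element sending one to the other. The associativity/compatibility verification you single out as the only nontrivial ingredient is indeed routine under the paper's conventions (note the paper uses the convention $A^{\sigma\tau}=(A^\sigma)^\tau$, and it already records explicitly that $Aus(Q,A)$ is a group under (\ref{aus}), which is precisely the associativity you need).
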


\begin{definition}
	Let $(G,\ast)$ be a groupoid and let $c$ be an element of $G$. Then $c$ is left (right, middle) nuclear in $(G,\ast)$ if $L_{c\ast x}=L_x L_c$ ($R_{x\ast c}=R_x R_c, L_{x \ast c}=L_c L_x$) for all $x\in G$.
\end{definition}
If $c$ is left, right and middle nuclear in $(G,\ast)$ then $c$ is called nuclear in groupoid $(G,\ast)$.
\begin{definition}
	The left nucleus $N_l$ (right nucleus $N_r$, middle nucleus $N_m$) of a quasigroup $(Q,\ast)$ is the set of all left (right, middle) nuclear elements in $(Q,\ast)$. Equivalently,
	\[N_l=\{a\in Q\mid a\ast(x\ast y)=(a\ast x)\ast y, \; \forall x, y \in Q\},\]
	\[N_r=\{a\in Q\mid (x\ast y) \ast a=x\ast(y\ast a), \; \forall x, y \in Q\}\] and
	\[N_m=\{a\in Q\mid (x\ast a)\ast y=x\ast(a\ast y), \; \forall x, y \in Q\}.\]
\end{definition}

The nucleus $N$ of the quasigroup $(Q,\ast)$ is defined as $N=N_l\cap N_r \cap N_m.$

If $N_l$ ($N_r$, $N_m$) is non empty, then $N_l$ ($N_r$, $N_m$) is a subgroup of quasigroup $(Q,\ast)$ \cite{Pflugfelder}. It has been shown that there is a weakness in Garrison's nucleus, viz., if a quasigroup $(Q,\ast)$ has a non-trivial (or non-empty) left or right or middle nucleus then the quasigroup is a left loop or a right loop or a loop respectively \cite{Garrison, Pflugfelder}.


\begin{definition} 
	The left (right, middle) A-nucleus of a quasigroup $(Q,\ast)$ is defined as the set of all
	autotopisms of the form $(\alpha,\varepsilon,\gamma)$ ($(\varepsilon,\beta,\gamma)$, $(\alpha,\beta,\varepsilon)$) of the quasigroup $(Q,\ast)$, where $\alpha,\beta,\gamma$ are permutations of the set $Q$ and $\varepsilon$ is the identity mapping.
	
	
\end{definition}

Note that the symbol A in the above definition stands for autotopy. We shall respectively denote these three sets of autotopisms by $N_l^A$, $N_r^A$ and $N_m^A$ and their sets of all $i^\text{th}$ components by $\prescript{}{i}N_l^A$, $\prescript{}{i}N_r^A$ and $\prescript{}{i}N_m^A$ respectively, for $i=1,2,3$.


Table \ref{table2} shows connections between components of the A-nuclei of a quasigroup $(Q,\cdot)$ and components of the A-nuclei of its isostrophic images of the form $(Q,\circ)=(Q,\cdot)(\sigma,R)$, where $\sigma \in S_3$, $R=(\alpha,\beta,\gamma)$ and $\alpha,\beta,\gamma$ are permutations of the set $Q$ \cite{Shcherbacov&2011}. Note that, to denote the A-nuclei of quasigroups $(Q,\circ)$ and $(Q,\cdot)$ the symbol $A$ (for autotopy) is replaced by the binary operations $\circ$ and $\cdot$ respectively.

\begin{table} [htp]
	\caption{Connections between components of A-nuclei of a quasigroup and its isostrophic images.} \label{table2}
	\centering
	\begin{adjustbox}{width=\textwidth,center}
		\begin{tabular}{|c|c|c|c|c|c|c|} \hline
			&$(\varepsilon,R)$ &$((1\,2),R)$ &$((1\,3),R)$ &$((2\,3),R)$&$((1\,3\,2),R)$ &$((1\,2\,3),R)$ \\ \hline\hline
			
			$\prescript{}{1}N_l^\circ$& $\alpha^{-1} \prescript{}{1}N^{\cdot}_{l} \alpha$& $\alpha^{-1}\prescript{}{2}N_r^\cdot \alpha$&$\alpha^{-1}\prescript{}{3}N_l^\cdot \alpha$&$\alpha^{-1}\prescript{}{1}N_m^\cdot \alpha$&$\alpha^{-1}\prescript{}{2}N_m^\cdot \alpha$&$\alpha^{-1}\prescript{}{3}N_r^\cdot \alpha$ \\ \hline
			
			$\prescript{}{3}N_l^\circ$& $\gamma^{-1} \prescript{}{3}N^{\cdot}_l \gamma$& $\gamma^{-1}\prescript{}{3}N_r^\cdot \gamma$&$\gamma^{-1}\prescript{}{1}N_l^\cdot \gamma$&$\gamma^{-1}\prescript{}{2}N_m^\cdot \gamma$&$\gamma^{-1}\prescript{}{1}N_m^\cdot \gamma$&$\gamma^{-1}\prescript{}{2}N_r^\cdot \gamma$ \\ \hline
			
			$\prescript{}{2}N_r^\circ$& $\beta^{-1} \prescript{}{2}N^{\cdot}_r \beta$& $\beta^{-1}\prescript{}{1}N_l^\cdot \beta$&$\beta^{-1}\prescript{}{2}N_m^\cdot \beta$&$\beta^{-1}\prescript{}{3}N_r^\cdot \beta$&$\beta^{-1}\prescript{}{3}N_l^\cdot \beta$&$\beta^{-1}\prescript{}{1}N_m^\cdot \beta$ \\ \hline
			
			$\prescript{}{3}N_r^\circ$& $\gamma^{-1}\prescript{}{3}N^{\cdot}_r \gamma$& $\gamma^{-1}\prescript{}{3}N_l^\cdot \gamma$&$\gamma^{-1}\prescript{}{1}N_m^\cdot \gamma$&$\gamma^{-1}\prescript{}{2}N_r^\cdot \gamma$&$\gamma^{-1}\prescript{}{1}N_l^\cdot \gamma$&$\gamma^{-1} \prescript{}{2}N_m^\cdot \gamma$ \\ \hline
			
			$\prescript{}{1}N_m^\circ$& $\alpha^{-1} \prescript{}{1}N^{\cdot}_m \alpha$& $\alpha^{-1}\prescript{}{2}N_m^\cdot \alpha$&$\alpha^{-1}\prescript{}{3}N_r^\cdot \alpha$&$\alpha^{-1}\prescript{}{1}N_l^\cdot \alpha$&$\alpha^{-1}\prescript{}{2}N_r^\cdot \alpha$&$\alpha^{-1}\prescript{}{3}N_l^\cdot \alpha$ \\ \hline
			
			$\prescript{}{2}N_m^\circ$& $\beta^{-1} \prescript{}{2}N^{\cdot}_m \beta$& $\beta^{-1}\prescript{}{1}N_m^\cdot \beta$&$\beta^{-1}\prescript{}{2}N_r^\cdot \beta$&$\beta^{-1}\prescript{}{3}N_l^\cdot \beta$&$\beta^{-1}\prescript{}{3}N_r^\cdot \beta$&$\beta^{-1}\prescript{}{1}N_l^\cdot \beta$ \\ \hline
			
		\end{tabular}
	\end{adjustbox}
\end{table} 

The left, right and middle A-nuclei of a quasigroup have been generalized to the left, right and middle $\sigma$-A-nuclei respectively, for $\sigma\in S_3$, as follows:

\begin{definition}  \cite{Chauhan22} \label{sigma}
	The left (right, middle) $\sigma$-A-nuclei of a quasigroup $(Q,\cdot)$ is defined as the set of all autostrophisms of the form $(\sigma,(\alpha,\varepsilon,\gamma))$ ($(\sigma,(\varepsilon,\beta,\gamma))$, $(\sigma,(\alpha,\beta,\varepsilon))$) of the quasigroup $(Q,\cdot)$, where $\alpha,\beta,\gamma$ are permutations of the set $Q$, $\varepsilon$ is the identity mapping and $\sigma \in S_3$. 
\end{definition}

Note that, the left, right and middle $\varepsilon$-A-nuclei of a quasigroup can be considered as the left, right and middle A-nuclei of the quasigroup respectively. 

We shall denote the left, right and middle $\sigma$-A-nuclei by $^{\sigma}N_l^A$, $^{\sigma}N_r^A$ and $^{\sigma}N_m^A$, respectively.
Let $\prescript{\sigma}{i}{N}_l^A$, $\prescript{\sigma}{i}{N}_r^A$ and $\prescript{\sigma}{i}{N}_m^A$ denote the respective sets of all $i^\text{th}$ components of members of the left, right and middle $\sigma$-A-nuclei, for $i=1,2,3$. 

It may be noted that $\prescript{\sigma}{2}{N}_l^A$ = $\prescript{\sigma}{1}{N}_r^A = \prescript{\sigma}{3}{N}_m^A = \{\varepsilon\}$. We shall call these sets as trivial component sets.

\sloppy
\begin{remark} \label{r1}	
It may be noted that $(\sigma,(\alpha_1,\alpha_2,\alpha_3))\in \prescript{\sigma}{}{N}_l^A$ iff $(\sigma^{-1},(\alpha_{\sigma^{-1}1},\alpha_{\sigma^{-1}2},\alpha_{\sigma^{-1}3}))\in \prescript{\sigma^{-1}}{}{N}_v^A$, where $v=
	\begin{cases*}
	r & if $\sigma^{-1} 2=1$,\\ 
	l & if $\sigma^{-1} 2=2$, \\
	m & if $\sigma^{-1} 2=3.$
	\end{cases*}$ Thus we have
	\begin{equation}
	\prescript{\sigma}{}{N}_l^A \neq \emptyset \text{ iff } \prescript{\sigma^{-1}}{}{N}_v^A \neq \emptyset, \text{ for all } \sigma\in S_3. 
	\end{equation}	
\end{remark}

\section{Properties of $\sigma$-A-nuclei} \label{sec3}
In this section we characterize the inverse sets of $\sigma$-A-nuclei and the products of two $\sigma$-A-nuclei of a quasigroup, and their respective component sets.
Further, we find connections between $\sigma$-A-nuclei of a quasigroup and $\sigma$-A-nuclei of its isostrophic images and parastrophic images, and their respective component sets.  


Recall that if $G$ is a group and $B\subseteq G$ is any subset, then we denote by $B^{-1}$ the set of inverses of elements of $B$, i.e. $B^{-1}=\{x^{-1} \mid x\in B\}$.
Let $(Q,\cdot)$ be a quasigroup and $G=Aus(Q,\cdot)$, the group all autostrophies of $(Q,\cdot)$. 
For each permutation $\sigma\in S_3$, we shall characterize $B^{-1}$, where $B=\prescript{\sigma}{}N_v^A \subseteq Aus(Q,\cdot)$, for $v\in \{l,m,r\}$.
For more clarity in notations and result, let us consider a particular example.

Let $\sigma=(1\,2)$ and $v=r$.
We claim that 
\begin{equation} \label{6}
\left(\prescript{(1\,2)}{}N_r^A\right)^{-1}=\prescript{(1\,2)}{}N_l^A.
\end{equation}
In view of Remark \ref{r1}, the result is trivial for the case  $\prescript{(1\,2)}{}N_r^A=\emptyset$. 
So, we prove (\ref{6}) for $\prescript{(1\,2)}{}N_r^A\neq \emptyset$.
We shall show the inclusions in both directions.
Let $\varphi \in \left(\prescript{(1\,2)}{}N_r^A\right)^{-1}$.
Then we have $\varphi^{-1} \in \prescript{(1\,2)}{}N_r^A$. 
Thus there exist permutations $\alpha_2,\alpha_3$ of $Q$ such that
$\varphi^{-1}=((1\,2),(\varepsilon,\alpha_2,\alpha_3))$, which implies 
\begin{equation} \label{7}
\varphi=((1\,2),(\alpha_2^{-1},\varepsilon, \alpha_3^{-1})) \in \prescript{(1\,2)}{}N_l^A.
\end{equation}
Thus $\left(\prescript{(1\,2)}{}N_r^A\right)^{-1} \subseteq \prescript{(1\,2)}{}N_l^A$.
The other way inclusion can be shown on similar lines.

Also, from (\ref{7}) we have $\alpha_2^{-1} \in \prescript{(1\,2)}{1}N_l^A $ and $\alpha_3^{-1} \in \prescript{(1\,2)}{3}N_l^A $, which implies  $\left(\prescript{(1\,2)}{2}N_r^A\right)^{-1} \subseteq \prescript{(1\,2)}{1}N_l^A$ and $\left(\prescript{(1\,2)}{3}N_r^A\right)^{-1} \subseteq \prescript{(1\,2)}{3}N_l^A$.
Similarly, $\prescript{(1\,2)}{1}N_l^A  \subseteq  \left(\prescript{(1\,2)}{2}N_r^A\right)^{-1}$ and $\prescript{(1\,2)}{3}N_l^A \subseteq  \left(\prescript{(1\,2)}{3}N_r^A\right)^{-1} $. Thus 
\[\left(\prescript{(1\,2)}{2}N_r^A\right)^{-1}=\prescript{(1\,2)}{1}N_l^A \text{ and } \left(\prescript{(1\,2)}{3}N_r^A\right)^{-1}= \prescript{(1\,2)}{3}N_l^A.\]

Note that we shall omit the relations for the trivial component sets.

The following theorem gives a complete description of the inverse sets of $\sigma$-A-nuclei of a quasigroup and their respective component sets, for any $\sigma \in S_3$.

\begin{theorem} \label{thm2}
	If $(Q,\cdot)$ is a quasigroup, then for all $\sigma\in S_3$:
	\begin{enumerate}
		\item $\left(\prescript{\sigma}{}N_l^A\right)^{-1}=\prescript{\sigma^{-1}}{}N_v^A$,	where $v=
		\begin{cases*}
		r & if $\sigma^{-1} 2=1$,\\ 
		l & if $\sigma^{-1} 2=2$, \\
		m & if $\sigma^{-1} 2=3.$
		\end{cases*}$ 
		Further, $\left(\prescript{\sigma}{1}N^A_l\right)^{-1}=\prescript{\sigma^{-1}}{\sigma^{-1}1}N_v^A$ and $\left(\prescript{\sigma}{3}N^A_l\right)^{-1}=\prescript{\sigma^{-1}}{\sigma^{-1}3}N_v^A$.
		
		\item $\left(\prescript{\sigma}{}N_r^A\right)^{-1}=\prescript{\sigma^{-1}}{}N_v^A$,	where $v=
		\begin{cases*}
		r & if $\sigma^{-1} 1=1$,\\ 
		l & if $\sigma^{-1} 1=2$, \\
		m & if $\sigma^{-1} 1=3.$
		\end{cases*}$
		Further, $\left(\prescript{\sigma}{2}N^A_r\right)^{-1}=\prescript{\sigma^{-1}}{\sigma^{-1}2}N_v^A$ and $\left(\prescript{\sigma}{3}N^A_r\right)^{-1}=\prescript{\sigma^{-1}}{\sigma^{-1}3}N_v^A$.
		\item $\left(\prescript{\sigma}{}N_m^A\right)^{-1}=\prescript{\sigma^{-1}}{}N_v^A$,	where $v=
		\begin{cases*}
		r & if $\sigma^{-1} 3=1$,\\ 
		l & if $\sigma^{-1} 3=2$, \\
		m & if $\sigma^{-1} 3=3.$
		\end{cases*}$
		Further, $\left(\prescript{\sigma}{1}N^A_m\right)^{-1}=\prescript{\sigma^{-1}}{\sigma^{-1}1}N_v^A$ and $\left(\prescript{\sigma}{2}N^A_m\right)^{-1}=\prescript{\sigma^{-1}}{\sigma^{-1}2}N_v^A$.
		
	\end{enumerate}
\end{theorem}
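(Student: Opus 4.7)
The plan is to mimic the worked example preceding the theorem (the case $\sigma=(1\,2)$, $v=r$) and extend it to an arbitrary $\sigma\in S_3$ by tracking the position of the identity permutation $\varepsilon$ through the inversion formula for isostrophisms. The only ingredients required are the explicit formula
\[
(\sigma,(\alpha_1,\alpha_2,\alpha_3))^{-1}=(\sigma^{-1},(\alpha_{\sigma 1}^{-1},\alpha_{\sigma 2}^{-1},\alpha_{\sigma 3}^{-1}))
\]
recorded in Section~2, together with Remark \ref{r1}, which guarantees that both sides of each claimed equality are simultaneously empty, so that only the nonempty case carries content.

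For part (1), I would pick $\varphi\in (\prescript{\sigma}{}N_l^A)^{-1}$, so that $\varphi^{-1}=(\sigma,(\alpha_1,\varepsilon,\alpha_3))$ for some permutations $\alpha_1,\alpha_3$ of $Q$. The inverse formula yields $\varphi=(\sigma^{-1},(\alpha_{\sigma 1}^{-1},\alpha_{\sigma 2}^{-1},\alpha_{\sigma 3}^{-1}))$, and the identity lands in the slot $i$ satisfying $\sigma i=2$, namely $i=\sigma^{-1} 2$. A three-way case split on whether $\sigma^{-1} 2$ equals $1$, $2$ or $3$ then identifies $\varphi$ as an element of $\prescript{\sigma^{-1}}{}N_r^A$, $\prescript{\sigma^{-1}}{}N_l^A$ or $\prescript{\sigma^{-1}}{}N_m^A$ respectively, which is precisely the value of $v$ asserted in the theorem. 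The reverse inclusion is symmetric: start from $\psi\in\prescript{\sigma^{-1}}{}N_v^A$, apply the inversion formula once more, and check that the identity ends up in slot $2$ of $\psi^{-1}$, so that $\psi^{-1}\in\prescript{\sigma}{}N_l^A$.

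For the component equalities in part (1), I would read the positions off directly from the same calculation: the $1$-st component $\alpha_1$ reappears, inverted, in the slot $j$ of $\varphi$ satisfying $\sigma j=1$, i.e.\ in the $\sigma^{-1} 1$-st slot, which yields $(\prescript{\sigma}{1}N_l^A)^{-1}=\prescript{\sigma^{-1}}{\sigma^{-1}1}N_v^A$; the $3$-rd component is handled identically. Parts (2) and (3) follow by the same mechanism with the distinguished slot of $\varepsilon$ shifted from $2$ to $1$ and from $2$ to $3$ respectively, which moves the governing index from $\sigma^{-1} 2$ to $\sigma^{-1} 1$ and $\sigma^{-1} 3$ in both the case split and the component statements. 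The main obstacle is not conceptual but notational: one must keep careful track of whether a subscript refers to a slot of the original triple, a slot of the inverted triple, or a value of $\sigma$ or $\sigma^{-1}$. I would guard against slips by writing each generic element of a nucleus with $\varepsilon$ placed in its defining slot and applying the inversion formula verbatim rather than relabelling in the middle of the computation.
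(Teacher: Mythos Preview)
Your proposal is correct and follows essentially the same approach as the paper: both arguments invoke Remark~\ref{r1} to dispose of the empty case, then apply the inversion formula $(\sigma,(\alpha_1,\alpha_2,\alpha_3))^{-1}=(\sigma^{-1},(\alpha_{\sigma 1}^{-1},\alpha_{\sigma 2}^{-1},\alpha_{\sigma 3}^{-1}))$ and track which slot receives the identity, doing the reverse inclusion symmetrically and reading off the component equalities from the same computation. Parts~(2) and~(3) are handled in the paper exactly as you suggest, by declaring them analogous.
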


\begin{proof}
	$(1)$ From Remark \ref{r1} it is clear that $\prescript{\sigma}{}N_l^A= \emptyset$ iff $\prescript{\sigma^{-1}}{}N_v^A= \emptyset $, so we are done.
	We prove the result for $\prescript{\sigma}{}N_l^A \neq \emptyset $.
	We shall show the inclusions in both directions.
	Let $\varphi \in \left(\prescript{\sigma}{}N_l^A\right)^{-1}$. 
	Then $\varphi^{-1} \in \prescript{\sigma}{}N_l^A$. 
	Thus there exist permutations $\alpha_1,\alpha_2,\alpha_3$ of $Q$ such that $\varphi^{-1}=(\sigma,(\alpha_1,\alpha_2,\alpha_3))$, where $\alpha_2=\varepsilon$.
	Hence 
	\begin{equation}
	\varphi=(\varphi^{-1})^{-1}=(\sigma^{-1},(\alpha^{-1}_{\sigma 1}, \alpha^{-1}_{\sigma 2}, \alpha^{-1}_{\sigma 3})).
	\end{equation} 
	Let $\varphi=(\sigma^{-1},(\varphi_1,\varphi_2,\varphi_3))$.
	On comparing the components we get $\varphi_i=\alpha^{-1}_{\sigma i}$, i.e.
	\begin{equation}\label{9}
	\varphi_{\sigma^{-1}i}=\alpha^{-1}_i, \text{ for } i=1,2,3.
	\end{equation} 
	When $i=2$, $\varphi_{\sigma^{-1}2}=\alpha^{-1}_2=\varepsilon$, which implies that $(\sigma^{-1}2)^\text{{th}}$ component of $\varphi$ is identity. 
	This shows $\varphi \in \prescript{\sigma^{-1}}{}N_v^A$ and hence
	$\left(\prescript{\sigma}{}N_l^A\right)^{-1} \subseteq \prescript{\sigma^{-1}}{}N_v^A.$
	Also, from (\ref{9}) and $\varphi=(\sigma^{-1},(\varphi_1,\varphi_2,\varphi_3)) \in \prescript{\sigma^{-1}}{}N_v^A$ we get $\alpha_i^{-1} \in \prescript{\sigma^{-1}}{\sigma^{-1}i}N_v^A $, which implies $\left(\prescript{\sigma}{i}N^A_l\right)^{-1} \subseteq \prescript{\sigma^{-1}}{\sigma^{-1}i}N_v^A$, for $i=1,3.$

	
	Conversely, let $\varphi \in \prescript{\sigma^{-1}}{}N_v^A$. 
	Then there exist permutations $\varphi_1,\varphi_2,\varphi_3$ of $Q$ such that $\varphi=(\sigma^{-1},(\varphi_1,\varphi_2,\varphi_3))$, where $\varphi_{\sigma^{-1}2}=\varepsilon$.
	Thus
	\begin{equation}\label{10}
	\varphi^{-1}=(\sigma,(\varphi^{-1}_{\sigma^{-1}1},\varphi^{-1}_{\sigma^{-1}2}, \varphi^{-1}_{\sigma^{-1}3}))=(\sigma,(\varphi^{-1}_{\sigma^{-1}1},\varepsilon,\varphi^{-1}_{\sigma^{-1}3})) \in \prescript{\sigma}{}N_l^A.
	\end{equation} 
	This shows $\varphi \in \left(\prescript{\sigma}{}N_l^A\right)^{-1}$ and hence $ \prescript{\sigma^{-1}}{}N_v^A \subseteq \left(\prescript{\sigma}{}N_l^A\right)^{-1}.$ 
	Also, from (\ref{10}) we get $\varphi_{\sigma^{-1}i} \in \left(\prescript{\sigma}{i}N^A_l\right)^{-1}$, which gives $\prescript{\sigma^{-1}}{\sigma^{-1}1}N_v^A \subseteq \left(\prescript{\sigma}{1}N^A_l\right)^{-1}$, for $i=1,3$ (since $\varphi_i \in \prescript{\sigma^{-1}}{i}N_v^A$). 
	
	$(2)$ and $(3)$ can be proved on similar lines.
\end{proof}

Using Theorem \ref{thm2} the components of inverse sets of a left, right and middle $\sigma$-A-nuclei can be found from the components of left, right and middle $\sigma^{-1}$-A-nuclei of a quasigroup and vice versa.
Table \ref{table3} shows relationships between components of $\sigma$-A-nuclei and $\sigma^{-1}$-A-nuclei of a quasigroup $(Q,\cdot)$ obtained from Theorem \ref{thm2}, for all $\sigma \in S_3$.

\begin{table}[h!]
	\caption{Relationships between components of $\sigma$-A-nuclei and $\sigma^{-1}$-A-nuclei.} \label{table3}
	\centering
	\begin{adjustbox}{width=\textwidth,center}
		\begin{tabular}{|l|l|l|l|} \hline
			$\sigma$ & & & \\ \hline\hline
			\multirow{2}{*}{$\varepsilon$} & \multicolumn{1}{|l}{$\left(\prescript{}{1}N^A_l\right)^{-1}=\prescript{}{1}N_l^A$} & \multicolumn{1}{|l}{$\left(\prescript{}{2}N^A_r\right)^{-1}=\prescript{}{2}N_r^A$} & \multicolumn{1}{|l|}{$\left(\prescript{}{1}N^A_m\right)^{-1}=\prescript{}{1}N_m^A$} \\\cline{2-4}
			& \multicolumn{1}{|l}{$\left(\prescript{}{3}N^A_l\right)^{-1}=\prescript{}{3}N_l^A$} & \multicolumn{1}{|l}{$\left(\prescript{}{3}N^A_r\right)^{-1}=\prescript{}{3}N_r^A$} & \multicolumn{1}{|l|}{$\left(\prescript{}{2}N^A_m\right)^{-1}=\prescript{}{2}N_m^A$}  \\ \hline
			
			\multirow{2}{*}{$(1\,2)$} &  \multicolumn{1}{|l}{$\left(\prescript{(1\,2)}{1}N^A_l\right)^{-1}=\prescript{(1\,2)}{2}N_r^A$} &
			\multicolumn{1}{|l}{$\left(\prescript{(1\,2)}{2}N^A_r\right)^{-1}=\prescript{(1\,2)}{1}N_l^A$} & \multicolumn{1}{|l|}{$\left(\prescript{(1\,2)}{2}N^A_m\right)^{-1}=\prescript{(1\,2)}{1}N_m^A$} \\\cline{2-4}
			&  \multicolumn{1}{|l}{$\left(\prescript{(1\,2)}{3}N^A_l\right)^{-1}=\prescript{(1\,2)}{3}N_r^A$} &
			\multicolumn{1}{|l}{$\left(\prescript{(1\,2)}{3}N^A_r\right)^{-1}=\prescript{(1\,2)}{3}N_l^A$} & \multicolumn{1}{|l|}{$\left(\prescript{(1\,2)}{1}N^A_m\right)^{-1}=\prescript{(1\,2)}{2}N_m^A$}  \\ \hline
			
			\multirow{2}{*}{$(1\,3)$} & \multicolumn{1}{|l}{$\left(\prescript{(1\,3)}{3}N^A_l\right)^{-1}\prescript{(1\,3)}{1}N_l^A$}  & \multicolumn{1}{|l}{$\left(\prescript{(1\,3)}{3}N^A_r\right)^{-1}=\prescript{(1\,3)}{1}N_m^A$} & \multicolumn{1}{|l|}{$\left(\prescript{(1\,3)}{2}N^A_m\right)^{-1}=\prescript{(1\,3)}{2}N_r^A$} \\\cline{2-4}
			& \multicolumn{1}{|l}{$\left(\prescript{(1\,3)}{1}N^A_l\right)^{-1}=\prescript{(1\,3)}{3}N_l^A$} & \multicolumn{1}{|l}{$\left(\prescript{(1\,3)}{2}N^A_r\right)^{-1}=\prescript{(1\,3)}{2}N_m^A$} 
			& \multicolumn{1}{|l|}{$\left(\prescript{(1\,3)}{1}N^A_m\right)^{-1}=\prescript{(1\,3)}{3}N_r^A$}  \\ \hline
			
			\multirow{2}{*}{$(2\,3)$} &
			\multicolumn{1}{|l}{$\left(\prescript{(2\,3)}{1}N^A_l\right)^{-1}=\prescript{(2\,3)}{1}N_m^A$}  & \multicolumn{1}{|l}{$\left(\prescript{(2\,3)}{3}N^A_r\right)^{-1}=\prescript{(2\,3)}{2}N_r^A$} & 
			\multicolumn{1}{|l|}{$\left(\prescript{(2\,3)}{1}N^A_m\right)^{-1}=\prescript{(2\,3)}{1}N_l^A$} \\\cline{2-4}
			&
			\multicolumn{1}{|l}{$\left(\prescript{(2\,3)}{3}N^A_l\right)^{-1}=\prescript{(2\,3)}{2}N_m^A$} &
			\multicolumn{1}{|l}{$\left(\prescript{(2\,3)}{2}N^A_r\right)^{-1}=\prescript{(2\,3)}{3}N_r^A$} & 
			\multicolumn{1}{|l|}{$\left(\prescript{(2\,3)}{2}N^A_m\right)^{-1}=\prescript{(2\,3)}{3}N_l^A$} \\ \hline
			
			\multirow{2}{*}{$(1\,2\,3)$} & \multicolumn{1}{|l}{$\left(\prescript{(1\,2\,3)}{3}N^A_l\right)^{-1}=\prescript{(1\,3\,2)}{2}N_r^A$} & \multicolumn{1}{|l}{$\left(\prescript{(1\,2\,3)}{2}N^A_r\right)^{-1}=\prescript{(1\,3\,2)}{1}N_m^A$} & \multicolumn{1}{|l|}{$\left(\prescript{(1\,2\,3)}{2}N^A_m\right)^{-1}=\prescript{(1\,3\,2)}{1}N_l^A$}  \\\cline{2-4}
			& \multicolumn{1}{|l}{$\left(\prescript{(1\,2\,3)}{1}N^A_l\right)^{-1}=\prescript{(1\,3\,2)}{3}N_r^A$} & \multicolumn{1}{|l}{$\left(\prescript{(1\,2\,3)}{3}N^A_r\right)^{-1}=\prescript{(1\,3\,2)}{2}N_m^A$} 	& \multicolumn{1}{|l|}{$\left(\prescript{(1\,2\,3)}{1}N^A_m\right)^{-1}=\prescript{(1\,3\,2)}{3}N_l^A$} \\ \hline
			
			\multirow{2}{*}{$(1\,3\,2)$} & \multicolumn{1}{|l}{$\left(\prescript{(1\,3\,2)}{3}N^A_l\right)^{-1}=\prescript{(1\,2\,3)}{1}N_m^A$} & \multicolumn{1}{|l}{$\left(\prescript{(1\,3\,2)}{3}N^A_r\right)^{-1}=\prescript{(1\,2\,3)}{1}N_l^A$} & \multicolumn{1}{|l|}{$\left(\prescript{(1\,3\,2)}{1}N^A_m\right)^{-1}=\prescript{(1\,2\,3)}{2}N_r^A$} \\\cline{2-4}
			& \multicolumn{1}{|l}{$\left(\prescript{(1\,3\,2)}{1}N^A_l\right)^{-1}=\prescript{(1\,2\,3)}{2}N_m^A$}& \multicolumn{1}{|l}{$\left(\prescript{(1\,3\,2)}{2}N^A_r\right)^{-1}=\prescript{(1\,2\,3)}{3}N_l^A$} & \multicolumn{1}{|l|}{$\left(\prescript{(1\,3\,2)}{2}N^A_m\right)^{-1}=\prescript{(1\,2\,3)}{3}N_r^A$}  \\ \hline
			
		\end{tabular}
	\end{adjustbox}
\end{table}

Recall that if $G$ is a group and $B$ and $C$ are subsets of $G$, then the product of $B$ and $C$ is the subset of $G$ defined as:
\[BC=\{bc \mid b \in B \text{ and } c\in C\}.\]
It may be noted that $BC=\emptyset$ iff at least one of $B$, $C=\emptyset$.

Let $(Q,\cdot)$ be a quasigroup and $G=Aus(Q,\cdot)$. 
For each permutation $\sigma, \tau \in S_3$, we shall characterize $BC$, where $B=\prescript{\sigma}{}N_u^A \subseteq Aus(Q,\cdot)$ and $C=\prescript{\tau}{}N_v^A \subseteq Aus(Q,\cdot)$, where $u,v\in \{l,m,r\}$.

The following theorem gives a complete description of products of $\sigma$-A-nuclei and $\tau$-A-nuclei of a quasigroup and their respective component sets, for any $\sigma,\tau \in S_3$.

\begin{theorem} \label{thm3}
	If $(Q,\cdot)$ is a quasigroup, then for all $\sigma, \tau \in S_3$:
	\begin{enumerate}
		\item $\prescript{\sigma}{}N_v^A \prescript{\tau}{}N_l^A= \prescript{\sigma \tau}{}N_l^A$, where $v=
		\begin{cases*}
		r & if $\tau^{-1} 2=1$,\\ 
		l & if $\tau^{-1} 2=2$, \\
		m & if $\tau^{-1} 2=3$,
		\end{cases*}$ provided  $\prescript{\sigma}{}N_v^A, \prescript{\tau}{}N_l^A \neq \emptyset$. 
		Further, 
		$\prescript{\sigma}{\tau^{-1}1}N_v^A \prescript{\tau}{1}N_l^A= \prescript{\sigma \tau}{1}N_l^A$ and $\prescript{\sigma}{\tau^{-1}3}N_v^A \prescript{\tau}{3}N_l^A= \prescript{\sigma \tau}{3}N_l^A$. 
		
		\item $\prescript{\sigma}{}N_v^A \prescript{\tau}{}N_r^A= \prescript{\sigma \tau}{}N_r^A$, where $v=
		\begin{cases*}
		r & if $\tau^{-1} 1=1$,\\ 
		l & if $\tau^{-1} 1=2$, \\
		m & if $\tau^{-1} 1=3$,
		\end{cases*}$ provided  $\prescript{\sigma}{}N_v^A, \prescript{\tau}{}N_r^A \neq \emptyset$.
		Further, 
		$\prescript{\sigma}{\tau^{-1}2}N_v^A \prescript{\tau}{2}N_r^A= \prescript{\sigma \tau}{2}N_r^A$ and $\prescript{\sigma}{\tau^{-1}3}N_v^A \prescript{\tau}{3}N_r^A= \prescript{\sigma \tau}{3}N_r^A$. 
		
		\item $\prescript{\sigma}{}N_v^A \prescript{\tau}{}N_m^A= \prescript{\sigma \tau}{}N_m^A$, where $v=
		\begin{cases*}
		r & if $\tau^{-1} 3=1$,\\ 
		l & if $\tau^{-1} 3=2$, \\
		m & if $\tau^{-1} 3=3$,
		\end{cases*}$ provided  $\prescript{\sigma}{}N_v^A, \prescript{\tau}{}N_m^A \neq \emptyset$. Further, 
		$\prescript{\sigma}{\tau^{-1}1}N_v^A \prescript{\tau}{1}N_m^A= \prescript{\sigma \tau}{1}N_m^A$ and $\prescript{\sigma}{\tau^{-1}2}N_v^A \prescript{\tau}{2}N_m^A= \prescript{\sigma \tau}{2}N_m^A$. 
	\end{enumerate}
\end{theorem}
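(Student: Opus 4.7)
The plan is to prove part (1) in detail; parts (2) and (3) follow from the identical argument with the identity-slot shifted from position $2$ to $1$ and $3$ respectively. For (1) I will establish the set equality $\prescript{\sigma}{}N_v^A \prescript{\tau}{}N_l^A = \prescript{\sigma \tau}{}N_l^A$ in two inclusions, and then read off the component-wise refinements by tracking the $i$-th coordinate through the same computation.

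For the forward inclusion, take $\varphi = (\sigma, R) \in \prescript{\sigma}{}N_v^A$ and $\psi = (\tau, S) \in \prescript{\tau}{}N_l^A$ and compute $\varphi\psi = (\sigma\tau, R^\tau S)$ via rule~(\ref{aus}). Since $(R^\tau S)_i = R_{\tau^{-1} i} \cdot S_i$, the second coordinate equals $R_{\tau^{-1} 2} \cdot \varepsilon = R_{\tau^{-1} 2}$. The case split defining $v$ is engineered precisely to force $R_{\tau^{-1} 2} = \varepsilon$: when $\tau^{-1} 2 = 1$ we have $v = r$ so $R_1 = \varepsilon$; when $\tau^{-1} 2 = 2$ we have $v = l$ so $R_2 = \varepsilon$; when $\tau^{-1} 2 = 3$ we have $v = m$ so $R_3 = \varepsilon$. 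Hence $\varphi\psi \in \prescript{\sigma\tau}{}N_l^A$.

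For the reverse inclusion, fix any $\psi_0 = (\tau, S_0) \in \prescript{\tau}{}N_l^A$, nonempty by hypothesis. Given $\theta = (\sigma\tau, T) \in \prescript{\sigma\tau}{}N_l^A$, I would write $\theta = (\theta \psi_0^{-1})\,\psi_0$. By Theorem~\ref{thm2}, $\psi_0^{-1} \in \prescript{\tau^{-1}}{}N_v^A$ with the very same $v$. A direct calculation using~(\ref{aus}) gives $\theta \psi_0^{-1} = (\sigma,\,T^{\tau^{-1}} W)$, where $W$ denotes the triplet of $\psi_0^{-1}$; its $v$-slot, after the shift $\tau^{-1}$, pulls back to position $2$ of the underlying data and reduces to $T_2 \cdot (S_0)_2^{-1} = \varepsilon \cdot \varepsilon^{-1} = \varepsilon$. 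Thus $\theta \psi_0^{-1} \in \prescript{\sigma}{}N_v^A$, so $\theta$ lies in the product.

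The component-wise equalities follow from the same formula $(R^\tau S)_i = R_{\tau^{-1} i} \cdot S_i$: in the forward direction this immediately yields $\prescript{\sigma}{\tau^{-1} i}N_v^A \cdot \prescript{\tau}{i}N_l^A \subseteq \prescript{\sigma\tau}{i}N_l^A$ for $i \in \{1,3\}$, and the factorization used for the reverse inclusion supplies each reverse containment coordinate by coordinate. The principal bookkeeping hazard is the convention $R^\tau_i = R_{\tau^{-1} i}$ (not $R_{\tau i}$) and checking that the case split for $v$ in each of (1), (2), (3) is the unique choice forcing the identity slot in $R^\tau$ to land at the correct position ($2$ for left, $1$ for right, $3$ for middle); once those index identifications are verified, the rest of the argument is mechanical.
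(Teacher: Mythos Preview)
Your proposal is correct and follows essentially the same approach as the paper: both establish the forward inclusion by computing $(\sigma,R)(\tau,S)=(\sigma\tau,R^{\tau}S)$ and observing that the case split on $v$ forces the identity into slot~$2$, and both handle the reverse inclusion by fixing an element of one factor and showing the quotient lies in the other. The only cosmetic difference is that the paper fixes $\phi\in\prescript{\sigma}{}N_v^A$ and shows $\phi^{-1}\varphi\in\prescript{\tau}{}N_l^A$, whereas you fix $\psi_0\in\prescript{\tau}{}N_l^A$ and show $\theta\psi_0^{-1}\in\prescript{\sigma}{}N_v^A$ (invoking Theorem~\ref{thm2} along the way); the index bookkeeping and the component-wise refinements are otherwise identical.
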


\begin{proof}
	$(1)$ 
	We shall show the inclusions in both directions. 	  
	Let $\phi \in \prescript{\sigma}{}N_v^A$ and $\psi \in \prescript{\tau}{}N_l^A$. 
	Then there exist permutations $\phi_1,\phi_2,\phi_3$ and $\psi_1,\psi_3$ of $Q$ such that $\phi=(\sigma,(\phi_1,\phi_2,\phi_3))$ and $\psi=(\tau,(\psi_1,\varepsilon,\psi_3))$, where $\phi_{\tau^{-1}2}=\varepsilon$.
	Then 
	\begin{align} 
	\phi \psi & = (\sigma \tau,(\phi_{\tau^{-1}1} \psi_1, \phi_{\tau^{-1}2}, \phi_{\tau^{-1}3} \psi_3))\\
	&= (\sigma \tau,(\phi_{\tau^{-1}1} \psi_1, \varepsilon, \phi_{\tau^{-1}3} \psi_3)) \in \prescript{\sigma \tau}{}N_l^A \label{12}
	\end{align}
	(because $\phi \psi \in Aus(Q,\cdot)$).
	Thus $\prescript{\sigma}{}N_v^A \prescript{\tau}{}N_l^A  \subseteq \prescript{\sigma \tau}{}N_l^A$. This gives $\prescript{\sigma \tau}{}N_l^A \neq  \emptyset$, since $\prescript{\sigma}{}N_v^A, \prescript{\tau}{}N_l^A \neq \emptyset$. 
	Also from (\ref{12}) we obtain $\phi_{\tau^{-1}1} \psi_1 \in \prescript{\sigma \tau}{1}N_l^A$  and $\phi_{\tau^{-1}3} \psi_3 \in \prescript{\sigma \tau}{3}N_l^A$, which implies $\prescript{\sigma}{\tau^{-1}1}N_v^A \prescript{\tau}{1}N_l^A \subseteq \prescript{\sigma \tau}{1}N_l^A$  and $\prescript{\sigma}{\tau^{-1}3}N_v^A \prescript{\tau}{3}N_l^A \subseteq \prescript{\sigma \tau}{3}N_l^A$.
	
	Conversely, suppose that $\varphi= (\sigma \tau,S) \in \prescript{\sigma \tau}{}N_l^A$, where $S= (\alpha_1,\varepsilon, \alpha_3)$ for some permutations $\alpha_1,\alpha_3$ of $Q$. 
	Since $\prescript{\sigma}{}N_v^A \neq \emptyset$, there exists an element say $\phi \in \prescript{\sigma}{}N_v^A$. 
	Then there exist permutations $\phi_1,\phi_2,\phi_3$ such that $\phi= (\sigma,(\phi_1,\phi_2,\phi_3))$, where $\phi_{\tau^{-1}2}=\varepsilon$.
	Let $T=(\phi_1,\phi_2,\phi_3)$. 
	As we know that  $\phi, \varphi \in Aus(Q,\cdot)$ and $Aus(Q,\cdot)$ is a group, we have $\phi^{-1} \varphi \in Aus(Q,\cdot)$. 
	Also \[\phi^{-1} \varphi= (\sigma, T)^{-1} (\sigma \tau,S)= (\sigma^{-1}, (T^{-1})^{\sigma^{-1}}) (\sigma \tau,S)= (\tau, (T^{-1})^{\tau} S).\]
	On substituting the values of $T$ and $S$, we get
	\begin{align*}
	\phi^{-1} \rho&= (\tau,(\phi_{\tau^{-1}1}^{-1}\alpha_1,\phi_{\tau^{-1} 2}^{-1}, \phi_{\tau^{-1}3}^{-1} \alpha_3))\\&=  (\tau,((\phi_{\tau^{-1}1}^{-1}\alpha_1,\varepsilon, \phi_{\tau^{-1}3}^{-1} \alpha_3))= \psi \text{(say)}  \in \prescript{\tau}{}N_l^A
	\end{align*}
	(because the second component is identity).
	Hence, for any element $\varphi \in \prescript{\sigma \tau}{}N_l^A$, there exists an element $\phi \in \prescript{\sigma}{}N_v^A$ and an element $\psi \in \prescript{\tau}{}N_l^A$ such that $\phi \psi= \varphi$, i.e., $ \prescript{\sigma \tau}{}N_l^A \subseteq \prescript{\sigma}{}N_v^A \prescript{\tau}{}N_l^A$. 
	Also we get for $\alpha_i\in \prescript{\sigma \tau}{i}N_l^A$ there exists $\phi_{\tau^{-1}i} \in \prescript{\sigma}{\tau^{-1}1}N_v^A$ and $\phi_{\tau^{-1}i}^{-1}\alpha_i\in \prescript{\tau}{i}N_l^A  $ such that $(\phi_{\tau^{-1}i}) (\phi_{\tau^{-1}i}^{-1} \alpha_i)=\alpha_i $, for $i=1,3$. 
	Thus  
	$ \prescript{\sigma \tau}{1}N_l^A \subseteq \prescript{\sigma}{\tau^{-1}1}N_v^A \prescript{\tau}{1}N_l^A $ and  $ \prescript{\sigma \tau}{3}N_l^A \subseteq \prescript{\sigma}{\tau^{-1}3}N_v^A \prescript{\tau}{3}N_l^A $.
	
	$(2)$ and $(3)$ can be proved on similar lines.
\end{proof}

Table \ref{table4} is obtained using Theorem \ref{thm3} that shows relationships between components of $\sigma$-A-nuclei, $\tau$-A-nuclei and $\sigma \tau$-A-nuclei of a quasigroup $(Q,\cdot)$, for all $\sigma, \tau \in S_3$. 
\afterpage{%
	\clearpage
	\thispagestyle{empty}
	\begin{landscape}
		\begin{table}
			\caption{Relationships between components of $\sigma$-A-nuclei, $\tau$-A-nuclei and $\sigma \tau$-A-nuclei.} \label{table4}
			\centering
			\scalebox{0.7}{
				\begin{tabular}{|ll|l|l|l|l|l|} \hline
					$\sigma$$\backslash \tau$ &\multicolumn{1}{|c}{$\varepsilon$} & \multicolumn{1}{|c}{$(1\,2)$} & \multicolumn{1}{|c}{$(1\,3)$}  & \multicolumn{1}{|c}{$(2\,3)$}  & \multicolumn{1}{|c}{$(1\,2\,3)$}  & \multicolumn{1}{|c|}{$(1\,3\,2)$} \\\hline 	\hline
					
					\multirow{6}{*}{$\varepsilon$}  & \multicolumn{1}{|l}{$\prescript{\varepsilon}{1}N_l^A \prescript{\varepsilon}{1}N_l^A= \prescript{\varepsilon}{1}N_l^A$} & \multicolumn{1}{|l}{$\prescript{\varepsilon}{2}N_r^A \prescript{(1\,2)}{1}N_l^A= \prescript{(1\,2)}{1}N_l^A$} & \multicolumn{1}{|l}{$\prescript{\varepsilon}{3}N_l^A \prescript{(1\,3)}{1}N_l^A= \prescript{(1\,3)}{1}N_l^A$} & \multicolumn{1}{|l}{$\prescript{\varepsilon}{1}N_m^A \prescript{(2\,3)}{1}N_l^A= \prescript{(2\,3)}{1}N_l^A$} & \multicolumn{1}{|l}{$\prescript{\varepsilon}{3}N_r^A \prescript{(1\,2\,3)}{1}N_l^A= \prescript{(1\,2\,3)}{1}N_l^A$} & \multicolumn{1}{|l|}{$\prescript{\varepsilon}{2}N_m^A \prescript{(1\,3\,2)}{1}N_l^A= \prescript{(1\,3\,2)}{1}N_l^A$} \\\cline{2-7}
					
					& \multicolumn{1}{|l}{$\prescript{\varepsilon}{3}N_l^A \prescript{\varepsilon}{3}N_l^A= \prescript{\varepsilon}{3}N_l^A$} & \multicolumn{1}{|l}{$\prescript{\varepsilon}{3}N_r^A \prescript{(1\,2)}{3}N_l^A= \prescript{(1\,2)}{3}N_l^A$} & \multicolumn{1}{|l}{$\prescript{\varepsilon}{1}N_l^A \prescript{(1\,3)}{3}N_l^A= \prescript{(1\,3)}{3}N_l^A$} & \multicolumn{1}{|l}{$\prescript{\varepsilon}{2}N_m^A \prescript{(2\,3)}{3}N_l^A= \prescript{(2\,3)}{3}N_l^A$} & \multicolumn{1}{|l}{$\prescript{\varepsilon}{2}N_r^A \prescript{(1\,2\,3)}{3}N_l^A= \prescript{(1\,2\,3)}{3}N_l^A$} & \multicolumn{1}{|l|}{$\prescript{\varepsilon}{1}N_m^A \prescript{(1\,3\,2)}{3}N_l^A= \prescript{(1\,3\,2)}{3}N_l^A$} \\\cline{2-7}
					
					& \multicolumn{1}{|l}{$\prescript{\varepsilon}{2}N_r^A \prescript{\varepsilon}{2}N_r^A= \prescript{\varepsilon}{2}N_r^A$} & \multicolumn{1}{|l}{$\prescript{\varepsilon}{1}N_l^A \prescript{(1\,2)}{2}N_r^A= \prescript{(1\,2)}{2}N_r^A$} & \multicolumn{1}{|l}{$\prescript{\varepsilon}{2}N_m^A \prescript{(1\,3)}{2}N_r^A= \prescript{(1\,3)}{2}N_r^A$} & \multicolumn{1}{|l}{$\prescript{\varepsilon}{3}N_r^A \prescript{(2\,3)}{2}N_r^A= \prescript{(2\,3)}{2}N_r^A$} & \multicolumn{1}{|l}{$\prescript{\varepsilon}{1}N_m^A \prescript{(1\,2\,3)}{2}N_r^A= \prescript{(1\,2\,3)}{2}N_r^A$} & \multicolumn{1}{|l|}{$\prescript{\varepsilon}{3}N_l^A \prescript{(1\,3\,2)}{2}N_r^A= \prescript{(1\,3\,2)}{2}N_r^A$} \\\cline{2-7}
					
					& \multicolumn{1}{|l}{$\prescript{\varepsilon}{3}N_r^A \prescript{\varepsilon}{3}N_r^A= \prescript{\varepsilon}{3}N_r^A$} & \multicolumn{1}{|l}{$\prescript{\varepsilon}{3}N_l^A \prescript{(1\,2)}{3}N_r^A= \prescript{(1\,2)}{3}N_r^A$} & \multicolumn{1}{|l}{$\prescript{\varepsilon}{1}N_m^A \prescript{(1\,3)}{3}N_r^A= \prescript{(1\,3)}{3}N_r^A$} & \multicolumn{1}{|l}{$\prescript{\varepsilon}{2}N_r^A \prescript{(2\,3)}{3}N_r^A= \prescript{(2\,3)}{3}N_r^A$} & \multicolumn{1}{|l}{$\prescript{\varepsilon}{2}N_m^A \prescript{(1\,2\,3)}{3}N_r^A= \prescript{(1\,2\,3)}{3}N_r^A$} & \multicolumn{1}{|l|}{$\prescript{\varepsilon}{1}N_l^A \prescript{(1\,3\,2)}{3}N_r^A= \prescript{(1\,3\,2)}{3}N_r^A$} \\\cline{2-7}
					
					& \multicolumn{1}{|l}{$\prescript{\varepsilon}{1}N_m^A \prescript{\varepsilon}{1}N_m^A= \prescript{\varepsilon}{1}N_m^A$} & \multicolumn{1}{|l}{$\prescript{\varepsilon}{2}N_m^A \prescript{(1\,2)}{1}N_m^A= \prescript{(1\,2)}{1}N_m^A$} & \multicolumn{1}{|l}{$\prescript{\varepsilon}{3}N_r^A \prescript{(1\,3)}{1}N_m^A= \prescript{(1\,3)}{1}N_m^A$} & \multicolumn{1}{|l}{$\prescript{\varepsilon}{1}N_l^A \prescript{(2\,3)}{1}N_m^A= \prescript{(2\,3)}{1}N_m^A$} & \multicolumn{1}{|l}{$\prescript{\varepsilon}{3}N_l^A \prescript{(1\,2\,3)}{1}N_m^A= \prescript{(1\,2\,3)}{1}N_m^A$} & \multicolumn{1}{|l|}{$\prescript{\varepsilon}{2}N_r^A \prescript{(1\,3\,2)}{1}N_m^A= \prescript{(1\,3\,2)}{1}N_m^A$}	\\\cline{2-7}
					
					& \multicolumn{1}{|l}{$\prescript{\varepsilon}{2}N_m^A \prescript{\varepsilon}{2}N_m^A= \prescript{\varepsilon}{2}N_m^A$} & \multicolumn{1}{|l}{$\prescript{\varepsilon}{1}N_m^A \prescript{(1\,2)}{2}N_m^A= \prescript{(1\,2)}{2}N_m^A$} & \multicolumn{1}{|l}{$\prescript{\varepsilon}{2}N_r^A \prescript{(1\,3)}{2}N_m^A= \prescript{(1\,3)}{2}N_m^A$} & \multicolumn{1}{|l}{$\prescript{\varepsilon}{3}N_l^A \prescript{(2\,3)}{2}N_m^A= \prescript{(2\,3)}{2}N_m^A$} & \multicolumn{1}{|l}{$\prescript{\varepsilon}{1}N_l^A \prescript{(1\,2\,3)}{2}N_m^A= \prescript{(1\,2\,3)}{2}N_m^A$} & \multicolumn{1}{|l|}{$\prescript{\varepsilon}{3}N_r^A \prescript{(1\,3\,2)}{2}N_m^A= \prescript{(1\,3\,2)}{2}N_m^A$}	 \\ \hline

					\multirow{6}{*}{$(1\,2)$}  & \multicolumn{1}{|l}{$\prescript{(1\,2)}{1}N_l^A \prescript{\varepsilon}{1}N_l^A= \prescript{(1\,2)}{1}N_l^A$} & \multicolumn{1}{|l}{$\prescript{(1\,2)}{2}N_r^A \prescript{(1\,2)}{1}N_l^A= \prescript{\varepsilon}{1}N_l^A$} & \multicolumn{1}{|l}{$\prescript{(1\,2)}{3}N_l^A \prescript{(1\,3)}{1}N_l^A= \prescript{(1\,2\,3)}{1}N_l^A$} & \multicolumn{1}{|l}{$\prescript{(1\,2)}{1}N_m^A \prescript{(2\,3)}{1}N_l^A= \prescript{(1\,3\,2)}{1}N_l^A$} & \multicolumn{1}{|l}{$\prescript{(1\,2)}{3}N_r^A \prescript{(1\,2\,3)}{1}N_l^A= \prescript{(1\,3)}{1}N_l^A$} & \multicolumn{1}{|l|}{$\prescript{(1\,2)}{2}N_m^A \prescript{(1\,3\,2)}{1}N_l^A= \prescript{(2\,3)}{1}N_l^A$} \\\cline{2-7}

					& \multicolumn{1}{|l}{$\prescript{(1\,2)}{3}N_l^A \prescript{\varepsilon}{3}N_l^A= \prescript{(1\,2)}{3}N_l^A$} & \multicolumn{1}{|l}{$\prescript{(1\,2)}{3}N_r^A \prescript{(1\,2)}{3}N_l^A= \prescript{\varepsilon}{3}N_l^A$} & \multicolumn{1}{|l}{$\prescript{(1\,2)}{1}N_l^A \prescript{(1\,3)}{3}N_l^A= \prescript{(1\,2\,3)}{3}N_l^A$} & \multicolumn{1}{|l}{$\prescript{(1\,2)}{2}N_m^A \prescript{(2\,3)}{3}N_l^A= \prescript{(1\,3\,2)}{3}N_l^A$} & \multicolumn{1}{|l}{$\prescript{(1\,2)}{2}N_r^A \prescript{(1\,2\,3)}{3}N_l^A= \prescript{(1\,3)}{3}N_l^A$} & \multicolumn{1}{|l|}{$\prescript{(1\,2)}{1}N_m^A \prescript{(1\,3\,2)}{3}N_l^A= \prescript{(2\,3)}{3}N_l^A$} \\\cline{2-7}

					& \multicolumn{1}{|l}{$\prescript{(1\,2)}{2}N_r^A \prescript{\varepsilon}{2}N_r^A= \prescript{(1\,2)}{2}N_r^A$} & \multicolumn{1}{|l}{$\prescript{(1\,2)}{1}N_l^A \prescript{(1\,2)}{2}N_r^A= \prescript{\varepsilon}{2}N_r^A$} & \multicolumn{1}{|l}{$\prescript{(1\,2)}{2}N_m^A \prescript{(1\,3)}{2}N_r^A= \prescript{(1\,2\,3)}{2}N_r^A$} & \multicolumn{1}{|l}{$\prescript{(1\,2)}{3}N_r^A \prescript{(2\,3)}{2}N_r^A= \prescript{(1\,3\,2)}{2}N_r^A$} & \multicolumn{1}{|l}{$\prescript{(1\,2)}{1}N_m^A \prescript{(1\,2\,3)}{2}N_r^A= \prescript{(1\,3)}{2}N_r^A$} & \multicolumn{1}{|l|}{$\prescript{(1\,2)}{3}N_l^A \prescript{(1\,3\,2)}{2}N_r^A= \prescript{(2\,3)}{2}N_r^A$} \\\cline{2-7}
					
					& \multicolumn{1}{|l}{$\prescript{(1\,2)}{3}N_r^A \prescript{\varepsilon}{3}N_r^A= \prescript{(1\,2)}{3}N_r^A$} & \multicolumn{1}{|l}{$\prescript{(1\,2)}{3}N_l^A \prescript{(1\,2)}{3}N_r^A= \prescript{\varepsilon}{3}N_r^A$} & \multicolumn{1}{|l}{$\prescript{(1\,2)}{1}N_m^A \prescript{(1\,3)}{3}N_r^A= \prescript{(1\,2\,3)}{3}N_r^A$} & \multicolumn{1}{|l}{$\prescript{(1\,2)}{2}N_r^A \prescript{(2\,3)}{3}N_r^A= \prescript{(1\,3\,2)}{3}N_r^A$} & \multicolumn{1}{|l}{$\prescript{(1\,2)}{2}N_m^A \prescript{(1\,2\,3)}{3}N_r^A= \prescript{(1\,3)}{3}N_r^A$} & \multicolumn{1}{|l|}{$\prescript{(1\,2)}{1}N_l^A \prescript{(1\,3\,2)}{3}N_r^A= \prescript{(2\,3)}{3}N_r^A$} \\\cline{2-7}
					
					& \multicolumn{1}{|l}{$\prescript{(1\,2)}{1}N_m^A \prescript{\varepsilon}{1}N_m^A= \prescript{(1\,2)}{1}N_m^A$} & \multicolumn{1}{|l}{$\prescript{(1\,2)}{2}N_m^A \prescript{(1\,2)}{1}N_m^A= \prescript{\varepsilon}{1}N_m^A$} & \multicolumn{1}{|l}{$\prescript{(1\,2)}{3}N_r^A \prescript{(1\,3)}{1}N_m^A= \prescript{(1\,2\,3)}{1}N_m^A$} & \multicolumn{1}{|l}{$\prescript{(1\,2)}{1}N_l^A \prescript{(2\,3)}{1}N_m^A= \prescript{(1\,3\,2)}{1}N_m^A$} & \multicolumn{1}{|l}{$\prescript{(1\,2)}{3}N_l^A \prescript{(1\,2\,3)}{1}N_m^A= \prescript{(1\,3)}{1}N_l^A$} & \multicolumn{1}{|l|}{$\prescript{(1\,2)}{2}N_r^A \prescript{(1\,3\,2)}{1}N_m^A= \prescript{(2\,3)}{1}N_m^A$} \\\cline{2-7}
					
					& \multicolumn{1}{|l}{$\prescript{(1\,2)}{2}N_m^A \prescript{\varepsilon}{2}N_m^A= \prescript{(1\,2)}{2}N_m^A$} & \multicolumn{1}{|l}{$\prescript{(1\,2)}{1}N_m^A \prescript{(1\,2)}{2}N_m^A= \prescript{\varepsilon}{2}N_m^A$} & \multicolumn{1}{|l}{$\prescript{(1\,2)}{2}N_r^A \prescript{(1\,3)}{2}N_m^A= \prescript{(1\,2\,3)}{2}N_m^A$} & \multicolumn{1}{|l}{$\prescript{(1\,2)}{3}N_l^A \prescript{(2\,3)}{2}N_m^A= \prescript{(1\,3\,2)}{2}N_m^A$} & \multicolumn{1}{|l}{$\prescript{(1\,2)}{1}N_l^A \prescript{(1\,2\,3)}{2}N_m^A= \prescript{(1\,3)}{2}N_l^A$} & \multicolumn{1}{|l|}{$\prescript{(1\,2)}{3}N_r^A \prescript{(1\,3\,2)}{2}N_m^A= \prescript{(2\,3)}{2}N_m^A$} \\ \hline

					\multirow{6}{*}{$(1\,3)$}  & \multicolumn{1}{|l}{$\prescript{(1\,3)}{1}N_l^A \prescript{\varepsilon}{1}N_l^A= \prescript{(1\,3)}{1}N_l^A$} & \multicolumn{1}{|l}{$\prescript{(1\,3)}{2}N_r^A \prescript{(1\,2)}{1}N_l^A= \prescript{(1\,3\,2)}{1}N_l^A$} & \multicolumn{1}{|l}{$\prescript{(1\,3)}{3}N_l^A \prescript{(1\,3)}{1}N_l^A= \prescript{\varepsilon}{1}N_l^A$} & \multicolumn{1}{|l}{$\prescript{(1\,3)}{1}N_m^A \prescript{(2\,3)}{1}N_l^A= \prescript{(1\,2\,3)}{1}N_l^A$} & \multicolumn{1}{|l}{$\prescript{(1\,3)}{3}N_r^A \prescript{(1\,2\,3)}{1}N_l^A= \prescript{(2\,3)}{1}N_l^A$} & \multicolumn{1}{|l|}{$\prescript{(1\,3)}{2}N_m^A \prescript{(1\,3\,2)}{1}N_l^A= \prescript{(1\,2)}{1}N_l^A$} \\\cline{2-7}
					
					& \multicolumn{1}{|l}{$\prescript{(1\,3)}{3}N_l^A \prescript{\varepsilon}{3}N_l^A= \prescript{(1\,3)}{3}N_l^A$} & \multicolumn{1}{|l}{$\prescript{(1\,3)}{3}N_r^A \prescript{(1\,2)}{3}N_l^A= \prescript{(1\,3\,2)}{3}N_l^A$} & \multicolumn{1}{|l}{$\prescript{(1\,3)}{1}N_l^A \prescript{(1\,3)}{3}N_l^A= \prescript{\varepsilon}{3}N_l^A$} & \multicolumn{1}{|l}{$\prescript{(1\,3)}{2}N_m^A \prescript{(2\,3)}{3}N_l^A= \prescript{(1\,2\,3)}{3}N_l^A$} & \multicolumn{1}{|l}{$\prescript{(1\,3)}{2}N_r^A \prescript{(1\,2\,3)}{3}N_l^A= \prescript{(2\,3)}{3}N_l^A$} & \multicolumn{1}{|l|}{$\prescript{(1\,3)}{1}N_m^A \prescript{(1\,3\,2)}{3}N_l^A= \prescript{(1\,2)}{3}N_l^A$} \\\cline{2-7}
					
					& \multicolumn{1}{|l}{$\prescript{(1\,3)}{2}N_r^A \prescript{\varepsilon}{2}N_r^A= \prescript{(1\,3)}{2}N_r^A$} & \multicolumn{1}{|l}{$\prescript{(1\,3)}{1}N_l^A \prescript{(1\,2)}{2}N_r^A= \prescript{(1\,3\,2)}{2}N_r^A$} & \multicolumn{1}{|l}{$\prescript{(1\,3)}{2}N_m^A \prescript{(1\,3)}{2}N_r^A= \prescript{\varepsilon}{2}N_r^A$} & \multicolumn{1}{|l}{$\prescript{(1\,3)}{3}N_r^A \prescript{(2\,3)}{2}N_r^A= \prescript{(1\,2\,3)}{2}N_r^A$} & \multicolumn{1}{|l}{$\prescript{(1\,3)}{1}N_m^A \prescript{(1\,2\,3)}{2}N_r^A= \prescript{(2\,3)}{2}N_r^A$} & \multicolumn{1}{|l|}{$\prescript{(1\,3)}{3}N_l^A \prescript{(1\,3\,2)}{2}N_r^A= \prescript{(1\,2)}{2}N_r^A$} \\\cline{2-7}
					
					& \multicolumn{1}{|l}{$\prescript{(1\,3)}{3}N_r^A \prescript{\varepsilon}{3}N_r^A= \prescript{(1\,3)}{3}N_r^A$} & \multicolumn{1}{|l}{$\prescript{(1\,3)}{3}N_l^A \prescript{(1\,2)}{3}N_r^A= \prescript{(1\,3\,2)}{3}N_r^A$} & \multicolumn{1}{|l}{$\prescript{(1\,3)}{1}N_m^A \prescript{(1\,3)}{3}N_r^A= \prescript{\varepsilon}{3}N_r^A$} & \multicolumn{1}{|l}{$\prescript{(1\,3)}{2}N_r^A \prescript{(2\,3)}{3}N_r^A= \prescript{(1\,2\,3)}{3}N_r^A$} & \multicolumn{1}{|l}{$\prescript{(1\,3)}{2}N_m^A \prescript{(1\,2\,3)}{3}N_r^A= \prescript{(2\,3)}{3}N_r^A$} & \multicolumn{1}{|l|}{$\prescript{(1\,3)}{1}N_l^A \prescript{(1\,3\,2)}{3}N_r^A= \prescript{(1\,2)}{3}N_r^A$} \\\cline{2-7}
					
					& \multicolumn{1}{|l}{$\prescript{(1\,3)}{1}N_m^A \prescript{\varepsilon}{1}N_m^A= \prescript{(1\,3)}{1}N_m^A$} & \multicolumn{1}{|l}{$\prescript{(1\,3)}{2}N_m^A \prescript{(1\,2)}{1}N_m^A= \prescript{(1\,3\,2)}{1}N_m^A$} & \multicolumn{1}{|l}{$\prescript{(1\,3)}{3}N_r^A \prescript{(1\,3)}{1}N_m^A= \prescript{\varepsilon}{1}N_m^A$} & \multicolumn{1}{|l}{$\prescript{(1\,3)}{1}N_l^A \prescript{(2\,3)}{1}N_m^A= \prescript{(1\,2\,3)}{1}N_m^A$} & \multicolumn{1}{|l}{$\prescript{(1\,3)}{3}N_l^A \prescript{(1\,2\,3)}{1}N_m^A= \prescript{(2\,3)}{1}N_m^A$} & \multicolumn{1}{|l|}{$\prescript{(1\,3)}{2}N_r^A \prescript{(1\,3\,2)}{1}N_m^A= \prescript{(1\,2)}{1}N_m^A$} \\\cline{2-7}
					
					& \multicolumn{1}{|l}{$\prescript{(1\,3)}{2}N_m^A \prescript{\varepsilon}{2}N_m^A= \prescript{(1\,3)}{2}N_m^A$} & \multicolumn{1}{|l}{$\prescript{(1\,3)}{1}N_m^A \prescript{(1\,2)}{2}N_m^A= \prescript{(1\,3\,2)}{2}N_m^A$} & \multicolumn{1}{|l}{$\prescript{(1\,3)}{2}N_r^A \prescript{(1\,3)}{2}N_m^A= \prescript{\varepsilon}{2}N_m^A$} & \multicolumn{1}{|l}{$\prescript{(1\,3)}{3}N_l^A \prescript{(2\,3)}{2}N_m^A= \prescript{(1\,2\,3)}{2}N_m^A$} & \multicolumn{1}{|l}{$\prescript{(1\,3)}{1}N_l^A \prescript{(1\,2\,3)}{2}N_m^A= \prescript{(2\,3)}{2}N_m^A$} & \multicolumn{1}{|l|}{$\prescript{(1\,3)}{3}N_r^A \prescript{(1\,3\,2)}{2}N_m^A= \prescript{(1\,2)}{2}N_m^A$} \\ \hline

					\multirow{6}{*}{$(2\,3)$}  & \multicolumn{1}{|l}{$\prescript{(2\,3)}{1}N_l^A \prescript{\varepsilon}{1}N_l^A= \prescript{(2\,3)}{1}N_l^A$} & \multicolumn{1}{|l}{$\prescript{(2\,3)}{2}N_r^A \prescript{(1\,2)}{1}N_l^A= \prescript{(1\,2\,3)}{1}N_l^A$} & \multicolumn{1}{|l}{$\prescript{(2\,3)}{3}N_l^A \prescript{(1\,3)}{1}N_l^A= \prescript{(1\,3\,2)}{1}N_l^A$} & \multicolumn{1}{|l}{$\prescript{(2\,3)}{1}N_m^A \prescript{(2\,3)}{1}N_l^A= \prescript{\varepsilon}{1}N_l^A$} & \multicolumn{1}{|l}{$\prescript{(2\,3)}{3}N_r^A \prescript{(1\,2\,3)}{1}N_l^A= \prescript{(1\,2)}{1}N_l^A$} & \multicolumn{1}{|l|}{$\prescript{(2\,3)}{2}N_m^A \prescript{(1\,3\,2)}{1}N_l^A= \prescript{(1\,3)}{1}N_l^A$} \\\cline{2-7}
					
					& \multicolumn{1}{|l}{$\prescript{(2\,3)}{3}N_l^A \prescript{\varepsilon}{3}N_l^A= \prescript{(2\,3)}{3}N_l^A$} & \multicolumn{1}{|l}{$\prescript{(2\,3)}{3}N_r^A \prescript{(1\,2)}{3}N_l^A= \prescript{(1\,2\,3)}{3}N_l^A$} & \multicolumn{1}{|l}{$\prescript{(2\,3)}{1}N_l^A \prescript{(1\,3)}{3}N_l^A= \prescript{(1\,3\,2)}{3}N_l^A$} & \multicolumn{1}{|l}{$\prescript{(2\,3)}{2}N_m^A \prescript{(2\,3)}{3}N_l^A= \prescript{\varepsilon}{3}N_l^A$} & \multicolumn{1}{|l}{$\prescript{(2\,3)}{2}N_r^A \prescript{(1\,2\,3)}{3}N_l^A= \prescript{(1\,2)}{3}N_l^A$} & \multicolumn{1}{|l|}{$\prescript{(2\,3)}{1}N_m^A \prescript{(1\,3\,2)}{3}N_l^A= \prescript{(1\,3)}{3}N_l^A$} \\\cline{2-7}
					
					& \multicolumn{1}{|l}{$\prescript{(2\,3)}{2}N_r^A \prescript{\varepsilon}{2}N_r^A= \prescript{(2\,3)}{2}N_r^A$} & \multicolumn{1}{|l}{$\prescript{(2\,3)}{1}N_l^A \prescript{(1\,2)}{2}N_r^A= \prescript{(1\,2\,3)}{2}N_r^A$} & \multicolumn{1}{|l}{$\prescript{(2\,3)}{2}N_m^A \prescript{(1\,3)}{2}N_r^A= \prescript{(1\,3\,2)}{2}N_r^A$} & \multicolumn{1}{|l}{$\prescript{(2\,3)}{3}N_r^A \prescript{(2\,3)}{2}N_r^A= \prescript{\varepsilon}{2}N_r^A$} & \multicolumn{1}{|l}{$\prescript{(2\,3)}{1}N_m^A \prescript{(1\,2\,3)}{2}N_r^A= \prescript{(1\,2)}{2}N_r^A$} & \multicolumn{1}{|l|}{$\prescript{(2\,3)}{3}N_l^A \prescript{(1\,3\,2)}{2}N_r^A= \prescript{(1\,3)}{2}N_r^A$} \\\cline{2-7}
					
					& \multicolumn{1}{|l}{$\prescript{(2\,3)}{3}N_r^A \prescript{\varepsilon}{3}N_r^A= \prescript{(2\,3)}{3}N_r^A$} & \multicolumn{1}{|l}{$\prescript{(2\,3)}{3}N_l^A \prescript{(1\,2)}{3}N_r^A= \prescript{(1\,2\,3)}{3}N_r^A$} & \multicolumn{1}{|l}{$\prescript{(2\,3)}{1}N_m^A \prescript{(1\,3)}{3}N_r^A= \prescript{(1\,3\,2)}{3}N_r^A$} & \multicolumn{1}{|l}{$\prescript{(2\,3)}{2}N_r^A \prescript{(2\,3)}{3}N_r^A= \prescript{\varepsilon}{3}N_r^A$} & \multicolumn{1}{|l}{$\prescript{(2\,3)}{2}N_m^A \prescript{(1\,2\,3)}{3}N_r^A= \prescript{(1\,2)}{3}N_r^A$} & \multicolumn{1}{|l|}{$\prescript{(2\,3)}{1}N_l^A \prescript{(1\,3\,2)}{3}N_r^A= \prescript{(1\,3)}{3}N_r^A$} \\\cline{2-7}
					
					& \multicolumn{1}{|l}{$\prescript{(2\,3)}{1}N_m^A \prescript{\varepsilon}{1}N_m^A= \prescript{(2\,3)}{1}N_m^A$} & \multicolumn{1}{|l}{$\prescript{(2\,3)}{2}N_m^A \prescript{(1\,2)}{1}N_m^A= \prescript{(1\,2\,3)}{1}N_m^A$} & \multicolumn{1}{|l}{$\prescript{(2\,3)}{3}N_r^A \prescript{(1\,3)}{1}N_m^A= \prescript{(1\,3\,2)}{1}N_m^A$} & \multicolumn{1}{|l}{$\prescript{(2\,3)}{1}N_l^A \prescript{(2\,3)}{1}N_m^A= \prescript{\varepsilon}{1}N_m^A$} & \multicolumn{1}{|l}{$\prescript{(2\,3)}{3}N_l^A \prescript{(1\,2\,3)}{1}N_m^A= \prescript{(1\,2)}{1}N_m^A$} & \multicolumn{1}{|l|}{$\prescript{(2\,3)}{2}N_r^A \prescript{(1\,3\,2)}{1}N_m^A= \prescript{(1\,3)}{1}N_m^A$} \\\cline{2-7}
					
					& \multicolumn{1}{|l}{$\prescript{(2\,3)}{2}N_m^A \prescript{\varepsilon}{2}N_m^A= \prescript{(2\,3)}{2}N_m^A$} & \multicolumn{1}{|l}{$\prescript{(2\,3)}{1}N_m^A \prescript{(1\,2)}{2}N_m^A= \prescript{(1\,2\,3)}{2}N_m^A$} & \multicolumn{1}{|l}{$\prescript{(2\,3)}{2}N_r^A \prescript{(1\,3)}{2}N_m^A= \prescript{(1\,3\,2)}{2}N_m^A$} & \multicolumn{1}{|l}{$\prescript{(2\,3)}{3}N_l^A \prescript{(2\,3)}{2}N_m^A= \prescript{\varepsilon}{2}N_m^A$} & \multicolumn{1}{|l}{$\prescript{(2\,3)}{1}N_l^A \prescript{(1\,2\,3)}{2}N_m^A= \prescript{(1\,2)}{2}N_m^A$} & \multicolumn{1}{|l|}{$\prescript{(2\,3)}{3}N_r^A \prescript{(1\,3\,2)}{2}N_m^A= \prescript{(1\,3)}{2}N_m^A$} \\ \hline

					\multirow{6}{*}{$(1\,2\,3)$}  & \multicolumn{1}{|l}{$\prescript{(1\,2\,3)}{1}N_l^A \prescript{\varepsilon}{1}N_l^A= \prescript{(1\,2\,3)}{1}N_l^A$} & \multicolumn{1}{|l}{$\prescript{(1\,2\,3)}{2}N_r^A \prescript{(1\,2)}{1}N_l^A= \prescript{(2\,3)}{1}N_l^A$} & \multicolumn{1}{|l}{$\prescript{(1\,2\,3)}{3}N_l^A \prescript{(1\,3)}{1}N_l^A= \prescript{(1\,2)}{1}N_l^A$} & \multicolumn{1}{|l}{$\prescript{(1\,2\,3)}{1}N_m^A \prescript{(2\,3)}{1}N_l^A= \prescript{(1\,3)}{1}N_l^A$} & \multicolumn{1}{|l}{$\prescript{(1\,2\,3)}{3}N_r^A \prescript{(1\,2\,3)}{1}N_l^A= \prescript{(1\,3\,2)}{1}N_l^A$} & \multicolumn{1}{|l|}{$\prescript{(1\,2\,3)}{2}N_m^A \prescript{(1\,3\,2)}{1}N_l^A= \prescript{\varepsilon}{1}N_l^A$} \\\cline{2-7}

					& \multicolumn{1}{|l}{$\prescript{(1\,2\,3)}{3}N_l^A \prescript{\varepsilon}{3}N_l^A= \prescript{(1\,2\,3)}{3}N_l^A$} & \multicolumn{1}{|l}{$\prescript{(1\,2\,3)}{3}N_r^A \prescript{(1\,2)}{3}N_l^A= \prescript{(2\,3)}{3}N_l^A$} & \multicolumn{1}{|l}{$\prescript{(1\,2\,3)}{1}N_l^A \prescript{(1\,3)}{3}N_l^A= \prescript{(1\,2)}{3}N_l^A$} & \multicolumn{1}{|l}{$\prescript{(1\,2\,3)}{2}N_m^A \prescript{(2\,3)}{3}N_l^A= \prescript{(1\,3)}{3}N_l^A$} & \multicolumn{1}{|l}{$\prescript{(1\,2\,3)}{2}N_r^A \prescript{(1\,2\,3)}{3}N_l^A= \prescript{(1\,3\,2)}{3}N_l^A$} & \multicolumn{1}{|l|}{$\prescript{(1\,2\,3)}{1}N_m^A \prescript{(1\,3\,2)}{3}N_l^A= \prescript{\varepsilon}{3}N_l^A$} \\\cline{2-7}
					
					& \multicolumn{1}{|l}{$\prescript{(1\,2\,3)}{2}N_r^A \prescript{\varepsilon}{2}N_r^A= \prescript{(1\,2\,3)}{2}N_r^A$} & \multicolumn{1}{|l}{$\prescript{(1\,2\,3)}{1}N_l^A \prescript{(1\,2)}{2}N_r^A= \prescript{(2\,3)}{2}N_r^A$} & \multicolumn{1}{|l}{$\prescript{(1\,2\,3)}{2}N_m^A \prescript{(1\,3)}{2}N_r^A= \prescript{(1\,2)}{2}N_r^A$} & \multicolumn{1}{|l}{$\prescript{(1\,2\,3)}{3}N_r^A \prescript{(2\,3)}{2}N_r^A= \prescript{(1\,3)}{2}N_r^A$} & \multicolumn{1}{|l}{$\prescript{(1\,2\,3)}{1}N_m^A \prescript{(1\,2\,3)}{2}N_r^A= \prescript{(1\,3\,2)}{2}N_r^A$} & \multicolumn{1}{|l|}{$\prescript{(1\,2\,3)}{3}N_l^A \prescript{(1\,3\,2)}{2}N_r^A= \prescript{\varepsilon}{2}N_r^A$} \\\cline{2-7}
					
					& \multicolumn{1}{|l}{$\prescript{(1\,2\,3)}{3}N_r^A \prescript{\varepsilon}{3}N_r^A= \prescript{(1\,2\,3)}{3}N_r^A$} & \multicolumn{1}{|l}{$\prescript{(1\,2\,3)}{3}N_l^A \prescript{(1\,2)}{3}N_r^A= \prescript{(2\,3)}{3}N_r^A$} & \multicolumn{1}{|l}{$\prescript{(1\,2\,3)}{1}N_m^A \prescript{(1\,3)}{3}N_r^A= \prescript{(1\,2)}{3}N_r^A$} & \multicolumn{1}{|l}{$\prescript{(1\,2\,3)}{2}N_r^A \prescript{(2\,3)}{3}N_r^A= \prescript{(1\,3)}{3}N_r^A$} & \multicolumn{1}{|l}{$\prescript{(1\,2\,3)}{2}N_m^A \prescript{(1\,2\,3)}{3}N_r^A= \prescript{(1\,3\,2)}{3}N_r^A$} & \multicolumn{1}{|l|}{$\prescript{(1\,2\,3)}{1}N_l^A \prescript{(1\,3\,2)}{3}N_r^A= \prescript{\varepsilon}{3}N_r^A$} \\\cline{2-7}
					
					& \multicolumn{1}{|l}{$\prescript{(1\,2\,3)}{1}N_m^A \prescript{\varepsilon}{1}N_m^A= \prescript{(1\,2\,3)}{1}N_m^A$} & \multicolumn{1}{|l}{$\prescript{(1\,2\,3)}{2}N_m^A \prescript{(1\,2)}{1}N_m^A= \prescript{(2\,3)}{1}N_m^A$} & \multicolumn{1}{|l}{$\prescript{(1\,2\,3)}{3}N_r^A \prescript{(1\,3)}{1}N_m^A= \prescript{(1\,2)}{1}N_m^A$} & \multicolumn{1}{|l}{$\prescript{(1\,2\,3)}{1}N_l^A \prescript{(2\,3)}{1}N_m^A= \prescript{(1\,3)}{1}N_m^A$} & \multicolumn{1}{|l}{$\prescript{(1\,2\,3)}{3}N_l^A \prescript{(1\,2\,3)}{1}N_m^A= \prescript{(1\,3\,2)}{1}N_m^A$} & \multicolumn{1}{|l|}{$\prescript{(1\,2\,3)}{2}N_r^A \prescript{(1\,3\,2)}{1}N_m^A= \prescript{\varepsilon}{1}N_m^A$} \\\cline{2-7}
					
					& \multicolumn{1}{|l}{$\prescript{(1\,2\,3)}{2}N_m^A \prescript{\varepsilon}{2}N_m^A= \prescript{(1\,2\,3)}{2}N_m^A$} & \multicolumn{1}{|l}{$\prescript{(1\,2\,3)}{1}N_m^A \prescript{(1\,2)}{2}N_m^A= \prescript{(2\,3)}{2}N_m^A$} & \multicolumn{1}{|l}{$\prescript{(1\,2\,3)}{2}N_r^A \prescript{(1\,3)}{2}N_m^A= \prescript{(1\,2)}{2}N_m^A$} & \multicolumn{1}{|l}{$\prescript{(1\,2\,3)}{3}N_l^A \prescript{(2\,3)}{2}N_m^A= \prescript{(1\,3)}{2}N_m^A$} & \multicolumn{1}{|l}{$\prescript{(1\,2\,3)}{1}N_l^A \prescript{(1\,2\,3)}{2}N_m^A= \prescript{(1\,3\,2)}{2}N_m^A$} & \multicolumn{1}{|l|}{$\prescript{(1\,2\,3)}{3}N_r^A \prescript{(1\,3\,2)}{2}N_m^A= \prescript{\varepsilon}{2}N_m^A$} \\ \hline
					
					\multirow{6}{*}{$(1\,3\,2)$}  & \multicolumn{1}{|l}{$\prescript{(1\,3\,2)}{1}N_l^A \prescript{\varepsilon}{1}N_l^A= \prescript{(1\,3\,2)}{1}N_l^A$} & \multicolumn{1}{|l}{$\prescript{(1\,3\,2)}{2}N_r^A \prescript{(1\,2)}{1}N_l^A= \prescript{(1\,3)}{1}N_l^A$} & \multicolumn{1}{|l}{$\prescript{(1\,3\,2)}{3}N_l^A \prescript{(1\,3)}{1}N_l^A= \prescript{(2\,3)}{1}N_l^A$} & \multicolumn{1}{|l}{$\prescript{(1\,3\,2)}{1}N_m^A \prescript{(2\,3)}{1}N_l^A= \prescript{(1\,2)}{1}N_l^A$} & \multicolumn{1}{|l}{$\prescript{(1\,3\,2)}{3}N_r^A \prescript{(1\,2\,3)}{1}N_l^A= \prescript{\varepsilon}{1}N_l^A$} & \multicolumn{1}{|l|}{$\prescript{(1\,3\,2)}{2}N_m^A \prescript{(1\,3\,2)}{1}N_l^A= \prescript{(1\,2\,3)}{1}N_l^A$} \\\cline{2-7}

					& \multicolumn{1}{|l}{$\prescript{(1\,3\,2)}{3}N_l^A \prescript{\varepsilon}{3}N_l^A= \prescript{(1\,3\,2)}{3}N_l^A$} & \multicolumn{1}{|l}{$\prescript{(1\,3\,2)}{3}N_r^A \prescript{(1\,2)}{3}N_l^A= \prescript{(1\,3)}{3}N_l^A$} & \multicolumn{1}{|l}{$\prescript{(1\,3\,2)}{1}N_l^A \prescript{(1\,3)}{3}N_l^A= \prescript{(2\,3)}{3}N_l^A$} & \multicolumn{1}{|l}{$\prescript{(1\,3\,2)}{2}N_m^A \prescript{(2\,3)}{3}N_l^A= \prescript{(1\,2)}{3}N_l^A$} & \multicolumn{1}{|l}{$\prescript{(1\,3\,2)}{2}N_r^A \prescript{(1\,2\,3)}{3}N_l^A= \prescript{\varepsilon}{3}N_l^A$} & \multicolumn{1}{|l|}{$\prescript{(1\,3\,2)}{1}N_m^A \prescript{(1\,3\,2)}{3}N_l^A= \prescript{(1\,2\,3)}{3}N_l^A$} \\\cline{2-7}
					
					& \multicolumn{1}{|l}{$\prescript{(1\,3\,2)}{2}N_r^A \prescript{\varepsilon}{2}N_r^A= \prescript{(1\,3\,2)}{2}N_r^A$} & \multicolumn{1}{|l}{$\prescript{(1\,3\,2)}{1}N_l^A \prescript{(1\,2)}{2}N_r^A= \prescript{(1\,3)}{2}N_r^A$} & \multicolumn{1}{|l}{$\prescript{(1\,3\,2)}{2}N_m^A \prescript{(1\,3)}{2}N_r^A= \prescript{(2\,3)}{2}N_r^A$} & \multicolumn{1}{|l}{$\prescript{(1\,3\,2)}{3}N_r^A \prescript{(2\,3)}{2}N_r^A= \prescript{(1\,2)}{2}N_r^A$} & \multicolumn{1}{|l}{$\prescript{(1\,3\,2)}{1}N_m^A \prescript{(1\,2\,3)}{2}N_r^A= \prescript{\varepsilon}{2}N_r^A$} & \multicolumn{1}{|l|}{$\prescript{(1\,3\,2)}{3}N_l^A \prescript{(1\,3\,2)}{2}N_r^A= \prescript{(1\,2\,3)}{2}N_r^A$} \\\cline{2-7}
					
					& \multicolumn{1}{|l}{$\prescript{(1\,3\,2)}{3}N_r^A \prescript{\varepsilon}{3}N_r^A= \prescript{(1\,3\,2)}{3}N_r^A$} & \multicolumn{1}{|l}{$\prescript{(1\,3\,2)}{3}N_l^A \prescript{(1\,2)}{3}N_r^A= \prescript{(1\,3)}{3}N_r^A$} & \multicolumn{1}{|l}{$\prescript{(1\,3\,2)}{1}N_m^A \prescript{(1\,3)}{3}N_r^A= \prescript{(2\,3)}{3}N_r^A$} & \multicolumn{1}{|l}{$\prescript{(1\,3\,2)}{2}N_r^A \prescript{(2\,3)}{3}N_r^A= \prescript{(1\,2)}{3}N_r^A$} & \multicolumn{1}{|l}{$\prescript{(1\,3\,2)}{2}N_m^A \prescript{(1\,2\,3)}{3}N_r^A= \prescript{\varepsilon}{3}N_r^A$} & \multicolumn{1}{|l|}{$\prescript{(1\,3\,2)}{1}N_l^A \prescript{(1\,3\,2)}{3}N_r^A= \prescript{(1\,2\,3)}{3}N_r^A$} \\\cline{2-7}
					
					& \multicolumn{1}{|l}{$\prescript{(1\,3\,2)}{1}N_m^A \prescript{\varepsilon}{1}N_m^A= \prescript{(1\,3\,2)}{1}N_m^A$} & \multicolumn{1}{|l}{$\prescript{(1\,3\,2)}{2}N_m^A \prescript{(1\,2)}{1}N_m^A= \prescript{(1\,3)}{1}N_m^A$} & \multicolumn{1}{|l}{$\prescript{(1\,3\,2)}{3}N_r^A \prescript{(1\,3)}{1}N_m^A= \prescript{(2\,3)}{1}N_m^A$} & \multicolumn{1}{|l}{$\prescript{(1\,3\,2)}{1}N_l^A \prescript{(2\,3)}{1}N_m^A= \prescript{(1\,2)}{1}N_m^A$} & \multicolumn{1}{|l}{$\prescript{(1\,3\,2)}{3}N_l^A \prescript{(1\,2\,3)}{1}N_m^A= \prescript{\varepsilon}{1}N_m^A$} & \multicolumn{1}{|l|}{$\prescript{(1\,3\,2)}{2}N_r^A \prescript{(1\,3\,2)}{1}N_m^A= \prescript{(1\,2\,3)}{1}N_m^A$} \\\cline{2-7}
					
					& \multicolumn{1}{|l}{$\prescript{(1\,3\,2)}{2}N_m^A \prescript{\varepsilon}{2}N_m^A= \prescript{(1\,3\,2)}{2}N_m^A$} & \multicolumn{1}{|l}{$\prescript{(1\,3\,2)}{1}N_m^A \prescript{(1\,2)}{2}N_m^A= \prescript{(1\,3)}{2}N_m^A$} & \multicolumn{1}{|l}{$\prescript{(1\,3\,2)}{2}N_r^A \prescript{(1\,3)}{2}N_m^A= \prescript{(2\,3)}{2}N_m^A$} & \multicolumn{1}{|l}{$\prescript{(1\,3\,2)}{3}N_l^A \prescript{(2\,3)}{2}N_m^A= \prescript{(1\,2)}{2}N_m^A$} & \multicolumn{1}{|l}{$\prescript{(1\,3\,2)}{1}N_l^A \prescript{(1\,2\,3)}{2}N_m^A= \prescript{\varepsilon}{2}N_m^A$} & \multicolumn{1}{|l|}{$\prescript{(1\,3\,2)}{3}N_r^A \prescript{(1\,3\,2)}{2}N_m^A= \prescript{(1\,2\,3)}{2}N_m^A$} \\ \hline
				\end{tabular}
			}
		\end{table}
	\end{landscape}
	\clearpage
}

Let quasigroup $(Q,\circ)$ be an isostrophic image of a quasigroup $(Q,\ast)$ with an isostrophism $\theta$, i.e., $(Q,\circ)=(Q,\ast)\theta$.
In order to distinguish the $\sigma$-A-nuclei of the quasigroups $(Q,\ast)$ and $(Q,\circ)$, we shall replace the symbol $A$ (which stands for autotopy) by $\ast$ and $\circ$ (the binary operations) respectively.

The following theorem gives a complete description of left, right and middle $\sigma$-A-nuclei of isostrophic images of the form $(Q,\circ)=(Q,\ast)(\tau,(\alpha_1,\alpha_2,\alpha_3))$ of a quasigroup $(Q,\ast)$ and their respective component sets, for any $\tau \in S_3$.



\begin{theorem} \label{nucleus_iso}
	If quasigroup $(Q,\circ)$ is an isostrophic image of a quasigroup $(Q,\ast)$ with an isostrophy $\theta=(\tau,(\alpha_1,\alpha_2,\alpha_3))$, i.e., $(Q,\circ)=(Q,\ast)\theta$, where $\tau \in S_3$, then the following hold:
	\begin{enumerate}
		\item $\prescript{\sigma}{}N_l^\circ= \theta^{-1} \prescript{\tau \sigma \tau^{-1}}{}N_v^\ast \theta$ $\iff$ $\sigma=\varepsilon$ or $(1\,3)$, where $v=
		\begin{cases*}
		r & if $\tau^{-1} 2=1$,\\ 
		l & if $\tau^{-1} 2=2$, \\
		m & if $\tau^{-1} 2=3.$
		\end{cases*}$
		Further, 
		if $\sigma \in \{\varepsilon, (1\,3)\}$, then
		$\prescript{\sigma}{1}N_l^\circ= \alpha_{\sigma^{-1}1}^{-1} \prescript{\tau \sigma \tau^{-1}}{\tau^{-1}1}N_v^\ast \alpha_1$ and $\prescript{\sigma}{3}N_l^\circ= \alpha_{\sigma^{-1}3}^{-1} \prescript{\tau \sigma \tau^{-1}}{\tau^{-1}3}N_v^\ast \alpha_3$. 
		
		\item $\prescript{\sigma}{}N_r^\circ= \theta^{-1} \prescript{\tau \sigma \tau^{-1}}{}N_v^\ast \theta$ $\iff$ $\sigma=\varepsilon$ or $(2\,3)$, where $v=
		\begin{cases*}
		r & if $\tau^{-1} 1=1$,\\ 
		l & if $\tau^{-1} 1=2$, \\
		m & if $\tau^{-1} 1=3.$
		\end{cases*}$
		Further, 
		if $\sigma \in \{\varepsilon, (2\,3)\}$ then $\prescript{\sigma}{2}N_r^\circ= \alpha_{\sigma^{-1}2}^{-1} \prescript{\tau \sigma  \tau^{-1}}{\tau^{-1}2}N_v^\ast \alpha_2$ and $\prescript{\sigma}{3}N_r^\circ= \alpha_{\sigma^{-1}3}^{-1} \prescript{\tau \sigma  \tau^{-1}}{\tau^{-1}3}N_v^\ast \alpha_3$. 
		
		\item $\prescript{\sigma}{}N_m^\circ= \theta^{-1} \prescript{\tau \sigma \tau^{-1}}{}N_v^\ast \theta$ $\iff$ $\sigma=\varepsilon$ or $(1\,2)$, where $v=
		\begin{cases*}
		r & if $\tau^{-1} 3=1$,\\ 
		l & if $\tau^{-1} 3=2$, \\
		m & if $\tau^{-1} 3=3.$
		\end{cases*}$
		Further, 
		if $\sigma \in \{\varepsilon,(1\,2)\}$ then $\prescript{\sigma}{1}N_m^\circ= \alpha_{\sigma^{-1}1}^{-1} \prescript{\tau \sigma \tau^{-1}}{\tau^{-1}1}N_v^\ast \alpha_1$ and $\prescript{\sigma}{2}N_m^\circ= \alpha_{\sigma^{-1}2}^{-1} \prescript{\tau \sigma \tau^{-1}}{\tau^{-1}2}N_v^\ast \alpha_2$. 	
	\end{enumerate} 
\end{theorem}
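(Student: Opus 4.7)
The plan rests on Theorem \ref{thm1}, which gives $Aus(Q,\circ)=\theta^{-1}Aus(Q,\ast)\theta$. I would start by fixing an arbitrary $\varphi=(\sigma',(\beta_1,\beta_2,\beta_3))\in Aus(Q,\ast)$ and computing $\theta^{-1}\varphi\theta$ explicitly, using the multiplication rule (\ref{aus}) together with the inverse formula for isostrophies. A direct calculation, combined with the identities $(R^\mu)^\nu=R^{\nu\mu}$ and $(RS)^\tau=R^\tau S^\tau$, shows that the first component of the conjugate is $\tau^{-1}\sigma'\tau$; demanding this equal $\sigma$ forces $\sigma'=\tau\sigma\tau^{-1}$, which explains the conjugate appearing in the statement. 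After this substitution the $i$-th component of the permutation triplet of $\theta^{-1}\varphi\theta$ collapses to $\alpha_{\sigma^{-1}i}^{-1}\beta_{\tau^{-1}i}\alpha_i$.

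For part $(1)$, membership of $\theta^{-1}\varphi\theta$ in $\prescript{\sigma}{}N_l^\circ$ requires its $2$-nd component to equal $\varepsilon$, i.e., $\alpha_{\sigma^{-1}2}^{-1}\beta_{\tau^{-1}2}\alpha_2=\varepsilon$. The value of $v$ is then dictated by the requirement that $\varphi\in\prescript{\tau\sigma\tau^{-1}}{}N_v^\ast$ should force precisely $\beta_{\tau^{-1}2}=\varepsilon$, yielding the casework $v=l,r,m$ according to whether $\tau^{-1}2$ equals $2,1,3$. Under this choice of $v$ the remaining condition reduces to $\alpha_{\sigma^{-1}2}=\alpha_2$, which holds for every isostrophy $\theta$ if and only if $\sigma^{-1}2=2$, that is, $\sigma\in\{\varepsilon,(1\,3)\}$.

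Assuming $\sigma\in\{\varepsilon,(1\,3)\}$, both inclusions follow: the direct computation shows $\theta^{-1}\prescript{\tau\sigma\tau^{-1}}{}N_v^\ast\theta\subseteq\prescript{\sigma}{}N_l^\circ$, and conversely any $\psi\in\prescript{\sigma}{}N_l^\circ$ can by Theorem \ref{thm1} be written as $\theta^{-1}\varphi\theta$ for some $\varphi\in Aus(Q,\ast)$, whereupon reading the condition $(\psi)_2=\varepsilon$ backward forces $\varphi\in\prescript{\tau\sigma\tau^{-1}}{}N_v^\ast$. The component-wise identities $\prescript{\sigma}{i}N_l^\circ=\alpha_{\sigma^{-1}i}^{-1}\prescript{\tau\sigma\tau^{-1}}{\tau^{-1}i}N_v^\ast\alpha_i$ for $i=1,3$ then drop out by reading off the $i$-th entry of $\alpha_{\sigma^{-1}i}^{-1}\beta_{\tau^{-1}i}\alpha_i$ as $\beta_{\tau^{-1}i}$ ranges over $\prescript{\tau\sigma\tau^{-1}}{\tau^{-1}i}N_v^\ast$. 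For the ``only if'' direction, when $\sigma\notin\{\varepsilon,(1\,3)\}$ one exhibits an isostrophy $\theta$ with $\alpha_{\sigma^{-1}2}\neq\alpha_2$, so that the conjugate of some $\varphi\in\prescript{\tau\sigma\tau^{-1}}{}N_v^\ast$ falls outside $\prescript{\sigma}{}N_l^\circ$, breaking the equality.

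Parts $(2)$ and $(3)$ go through by exactly the same argument, with the ``distinguished identity position'' shifting from $2$ to $1$ and $3$ respectively; the constraints become $\sigma^{-1}1=1$ and $\sigma^{-1}3=3$, producing the sets $\{\varepsilon,(2\,3)\}$ and $\{\varepsilon,(1\,2)\}$. The main obstacle I expect is the careful bookkeeping of the nested index shifts: the twist $R^\tau$ in the multiplication, the inverse formula $\theta^{-1}=(\tau^{-1},(A^{-1})^{\tau^{-1}})$, and the simplification after substituting $\sigma'=\tau\sigma\tau^{-1}$ must be tracked in precisely the right order to collapse the raw expression into the clean conjugation form $\alpha_{\sigma^{-1}i}^{-1}\beta_{\tau^{-1}i}\alpha_i$; everything downstream is bookkeeping, but this reduction is where a sign or index slip would propagate through all three parts of the theorem.
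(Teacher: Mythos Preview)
Your approach is essentially the same as the paper's: both compute the conjugate $\theta^{-1}\varphi\theta$ (or equivalently $\theta\varphi\theta^{-1}$) explicitly via the multiplication rule (\ref{aus}), extract the component formula $\alpha_{\sigma^{-1}i}^{-1}\beta_{\tau^{-1}i}\alpha_i$, and read off the constraint $\alpha_{\sigma^{-1}2}=\alpha_2$ from the requirement that the second component be $\varepsilon$. The paper carries out the two inclusions separately (first conjugating elements of $\prescript{\sigma}{}N_l^\circ$ by $\theta$, then elements of $\prescript{\tau\sigma\tau^{-1}}{}N_v^\ast$ by $\theta^{-1}$), whereas you derive the general component formula once and specialise; this is only a cosmetic difference.

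One small point: your ``only if'' argument proposes to \emph{exhibit} an isostrophy $\theta$ with $\alpha_{\sigma^{-1}2}\neq\alpha_2$, but in the theorem $\theta$ is fixed by hypothesis, so you cannot choose it. The paper handles this step equally loosely, simply asserting that $\alpha_{\sigma^{-1}2}\alpha_2^{-1}=\varepsilon$ is equivalent to $\sigma\in\{\varepsilon,(1\,3)\}$, which tacitly assumes the $\alpha_i$ are distinct (or that the statement is meant to hold for generic $\theta$). So your treatment of this direction is no worse than the paper's, but be aware that as literally stated the ``only if'' can fail for special $\theta$ (e.g.\ when $\alpha_1=\alpha_2=\alpha_3$).
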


\begin{proof}
	$(1)$ We shall show the inclusions in both directions. 
	Let $T=(\alpha_1,\alpha_2,\alpha_3)$, i.e., $\theta=(\tau,T)$ and $\varphi \in \prescript{\sigma}{}N_l^\circ$. 
	Then there exist permutations $\varphi_1,\varphi_2,\varphi_3$ of $Q$ such that $\varphi= (\sigma, (\varphi_1,\varphi_2,\varphi_3))=(\sigma,R)$ (say) where $\varphi_2=\varepsilon$. 
	We have $\varphi\in Aus(Q,\circ)$ and by Theorem \ref{thm1}, $\theta \varphi\theta^{-1} \in Aus(Q,\ast)$.
	Also 
	\begin{align*}\theta \varphi\theta^{-1}&= (\tau,T) (\sigma, R) (\tau, T)^{-1}\\
	&= (\tau, T) (\sigma \tau ^{-1}, R^{\tau^{-1}}(T^{-1})^{\tau^{-1}} )\\
	&= (\tau \sigma \tau ^{-1}, T^{\sigma \tau ^{-1}} R^{\tau^{-1}}(T^{-1})^{\tau^{-1}} ).
	\end{align*}
	On substituting the values of $T$ and $R$ we get 
	\begin{align*}
	\theta \varphi\theta^{-1}&= (\tau \sigma \tau ^{-1}, (\alpha_{( \tau \sigma^{-1})1} \varphi_{\tau 1} \alpha^{-1}_{\tau 1},\alpha_{( \tau \sigma^{-1})2} \varphi_{\tau 2} \alpha^{-1}_{\tau 2}, \alpha_{( \tau \sigma^{-1})3} \varphi_{\tau 3} \alpha^{-1}_{\tau 3}))\\ &=(\tau \sigma \tau ^{-1}, (\beta_1,\beta_2,\beta_3))\text{ (say)}.
	\end{align*}
	Comparing the components we obtain $\beta_i= \alpha_{( \tau \sigma^{-1})i} \varphi_{\tau i} \alpha^{-1}_{\tau i}$, i.e., $\beta_{\tau^{-1}i}= \alpha_{\sigma^{-1}i} \varphi_{i} \alpha^{-1}_{i}$ for $i=1,2,3$. 
	As $\varphi_{2}=\varepsilon$, for $i=2$ we have  $\beta_{\tau^{-1}2}= \alpha_{\sigma^{-1}2} \alpha^{-1}_{2}$.
	Thus $\theta \varphi\theta^{-1} \in \prescript{\tau \sigma \tau^{-1}}{}N_v^\ast $ iff $\beta_{\tau^{-1}2}=\varepsilon$ iff $\alpha_{\sigma^{-1}2} \alpha^{-1}_{2}=\varepsilon$, i.e., $\sigma=\varepsilon$ or $(1\,3)$ (since $\sigma \in S_3$).
	Hence 
	\begin{equation} \label{eq1}
	\theta \prescript{\sigma}{}N_l^\circ \theta^{-1} \subseteq \prescript{\tau \sigma \tau^{-1}}{}N_v^\ast \iff \sigma=\varepsilon \;\text{or}\; (1\,3).
	\end{equation}
	Also, for $\sigma \in \{\varepsilon, (1\,3)\}$ and $i=1,3$, we obtain $\alpha_{\sigma^{-1}i} \varphi_{i} \alpha^{-1}_{i}= \beta_{\tau^{-1}i}\in \prescript{\tau \sigma \tau^{-1}}{i}N_v^\ast $, that gives
	$\alpha_{\sigma^{-1}1} \prescript{\sigma}{1}N_l^\circ \alpha_1^{-1} \subseteq \prescript{\tau \sigma \tau^{-1}}{\tau^{-1}1}N_v^\ast $ and $\alpha_{\sigma^{-1}3} \prescript{\sigma}{3}N_l^\circ \alpha_3^{-1} \subseteq \prescript{\tau \sigma \tau^{-1}}{\tau^{-1}3}N_v^\ast $.
	
	Conversely, let $\varphi=(\tau \sigma \tau^{-1},R) \in \prescript{\tau \sigma \tau^{-1}}{}N_v^\ast $.
	Then there exist permutations $\varphi_1,\varphi_2,\varphi_3$ of $Q$ such that $R=(\varphi_1, \varphi_2 , \varphi_3)$,  where $\varphi_{\tau^{-1} 2}=\varepsilon$.
	We have $\varphi\in Aus(Q,\ast)$, and by Theorem \ref{thm1}, we get $\theta^{-1}\varphi\theta \in Aus(Q,\circ)$.
	Also \begin{align*}
	\theta^{-1}\varphi\theta &= (\tau,T)^{-1} (\tau \sigma \tau^{-1},R) (\tau, T)\\
	&= (\tau^{-1}, (T^{-1})^{\tau^{-1}}) (\tau \sigma, R^{\tau}T )\\
	&= (\sigma, (T^{-1})^{\sigma}R^{\tau}T).
	\end{align*}
	On substituting the values of $T$ and $R$, and as $\varphi_{\tau^{-1} 2}=\varepsilon$, we get $\theta^{-1}\varphi\theta = (\sigma, (\alpha^{-1}_{\sigma^{-1}1} \varphi_{\tau^{-1}1} \alpha_1, \alpha^{-1}_{\sigma^{-1}2} \alpha_2, \alpha^{-1}_{\sigma^{-1}3} \varphi_{\tau^{-1}3} \alpha_3)) \in \prescript{\sigma}{}N_l^\circ $ iff $\alpha_{\sigma^{-1}2} \alpha^{-1}_{2}=\varepsilon$, i.e., $\sigma=\varepsilon$ or $(1\,3)$. 
	Hence 
	\begin{equation} \label{eq2}
	\theta^{-1} \prescript{\tau \sigma \tau^{-1}}{}N_v^\ast \theta \subseteq \prescript{\sigma}{}N_l^\circ \iff \sigma=\varepsilon \;\text{or}\; (1\,3).
	\end{equation}
	Also, for $ \sigma \in \{\varepsilon, (1\,3)\}$,
	we obtain $ \alpha_{\sigma^{-1}1}^{-1} \varphi_{\tau^{-1}1} \alpha_1 \in \prescript{\sigma}{1}N_l^\circ$ and $ \alpha_{\sigma^{-1}3}^{-1} \varphi_{\tau^{-1}3} \alpha_3 \in \prescript{\sigma}{3}N_l^\circ$. Therefore $ \alpha_{\sigma^{-1}1}^{-1} \prescript{\tau \sigma \tau^{-1}}{\tau^{-1}1}N_v^\ast \alpha_1 \subseteq \prescript{\sigma}{1}N_l^\circ$ and $ \alpha_{\sigma^{-1}3}^{-1} \prescript{\tau \sigma \tau^{-1}}{\tau^{-1}3}N_v^\ast \alpha_3\subseteq \prescript{\sigma}{3}N_l^\circ$.
	From (\ref{eq1}) and (\ref{eq2}) we obtain $\prescript{\sigma}{}N_l^\circ= \theta^{-1} \prescript{\tau \sigma \tau^{-1}}{}N_v^\ast \theta$ $\iff$ $\sigma=\varepsilon$ or $(1\,3)$. 
	Also for $ \sigma \in \{\varepsilon, (1\,3)\}$, we have $\prescript{\sigma}{1}N_l^\circ= \alpha_{\sigma^{-1}1}^{-1} \prescript{\tau \sigma \tau^{-1}}{\tau^{-1}1}N_v^\ast \alpha_1$ and $\prescript{\sigma}{3}N_l^\circ= \alpha_{\sigma^{-1}3}^{-1} \prescript{\tau \sigma \tau^{-1}}{\tau^{-1}3}N_v^\ast \alpha_3$.
	
	$(2)$ and $(3)$ can be proved on similar lines.
\end{proof}

For $\sigma\in\{(1\,3),(2\,3),(1\,2)\}$, Table \ref{table5} shows connections between components of $\sigma$-A-nuclei of a quasigroup $(Q,\ast)$ and components of $\sigma$-A-nuclei of its isostrophic images of the form $(Q,\circ)=(Q,\ast)(\tau, T)$, where $T=(\alpha,\beta,\gamma)$ and  $\alpha,\beta,\gamma$ are permutations of the set $Q$, for all $\tau \in S_3$. 

Note: We already have Table \ref{table2} that shows connections between components of $\varepsilon$-A-nuclei of a quasigroup and its isostrophic images. Hence we have omitted the case when $\sigma=\varepsilon$, as left (right,middle) $\varepsilon$-A-nucleus of a quasigroup coincides with the left (right,middle) A-nucleus of the quasigroup.

\begin{table}[h!]
	\caption{Connections between components of $\sigma$-A-nuclei of a quasigroup and its isostrophic images.} \label{table5}
	\centering
	\begin{adjustbox}{width=\textwidth,center}
		\begin{tabular}{|l|l|l|l|l|l|l|} \hline
			&\multicolumn{1}{c}{$(\varepsilon,T)$} & \multicolumn{1}{|c}{$((1\,2),T)$} & \multicolumn{1}{|c}{$((1\,3),T)$}  & \multicolumn{1}{|c}{$((2\,3),T)$}  & \multicolumn{1}{|c}{$((1\,2\,3),T)$}  & \multicolumn{1}{|c|}{$((1\,3\,2),T)$} \\\hline 	\hline
			
			\multicolumn{1}{|l}{$\prescript{(1\,3)}{1}N_l^\circ$}& 
			\multicolumn{1}{|l}{$\gamma^{-1} \prescript{(1\,3)}{1}N^{\ast}_{l} \alpha$}& \multicolumn{1}{|l}{$\gamma^{-1}\prescript{(2\,3)}{2}N_r^\ast \alpha$}& \multicolumn{1}{|l}{$\gamma^{-1}\prescript{(1\,3)}{3}N_l^\ast \alpha$}& \multicolumn{1}{|l}{$\gamma^{-1}\prescript{(1\,2)}{1}N_m^\ast \alpha$}& \multicolumn{1}{|l}{$\gamma^{-1}\prescript{(2\,3)}{3}N_r^\ast \alpha$}& \multicolumn{1}{|l|}{$\gamma^{-1}\prescript{(1\,2)}{2}N_m^\ast \alpha$} \\ \hline
			
			\multicolumn{1}{|l}{$\prescript{(1\,3)}{3}N_l^\circ$}& 
			\multicolumn{1}{|l}{$\alpha^{-1} \prescript{(1\,3)}{3}N^{\ast}_{l} \gamma$}& \multicolumn{1}{|l}{$\alpha^{-1}\prescript{(2\,3)}{3}N_r^\ast \gamma$}& \multicolumn{1}{|l}{$\alpha^{-1}\prescript{(1\,3)}{1}N_l^\ast \gamma$}& \multicolumn{1}{|l}{$\alpha^{-1}\prescript{(1\,2)}{2}N_m^\ast \gamma$}& \multicolumn{1}{|l}{$\alpha^{-1}\prescript{(2\,3)}{2}N_r^\ast \gamma$}& \multicolumn{1}{|l|}{$\alpha^{-1}\prescript{(1\,2)}{1}N_m^\ast \gamma$} \\ \hline
			
			\multicolumn{1}{|l}{$\prescript{(2\,3)}{2}N_r^\circ$}& 
			\multicolumn{1}{|l}{$\gamma^{-1} \prescript{(2\,3)}{2}N^{\ast}_{r} \beta$}& \multicolumn{1}{|l}{$\gamma^{-1}\prescript{(1\,3)}{1}N_l^\ast \beta$}& \multicolumn{1}{|l}{$\gamma^{-1}\prescript{(1\,2)}{2}N_m^\ast \beta$}& \multicolumn{1}{|l}{$\gamma^{-1}\prescript{(2\,3)}{3}N_r^\ast \beta$}& \multicolumn{1}{|l}{$\gamma^{-1}\prescript{(1\,2)}{1}N_m^\ast \beta$}& \multicolumn{1}{|l|}{$\gamma^{-1}\prescript{(1\,3)}{3}N_l^\ast \beta$} \\ \hline
			
			\multicolumn{1}{|l}{$\prescript{(2\,3)}{3}N_r^\circ$}& 
			\multicolumn{1}{|l}{$\beta^{-1} \prescript{(2\,3)}{3}N^{\ast}_{r} \gamma$}& \multicolumn{1}{|l}{$\beta^{-1}\prescript{(1\,3)}{3}N_l^\ast \gamma$}& \multicolumn{1}{|l}{$\beta^{-1}\prescript{(1\,2)}{1}N_m^\ast \gamma$}& \multicolumn{1}{|l}{$\beta^{-1}\prescript{(2\,3)}{2}N_r^\ast \gamma$}& \multicolumn{1}{|l}{$\beta^{-1}\prescript{(1\,2)}{2}N_m^\ast \gamma$}& \multicolumn{1}{|l|}{$\beta^{-1}\prescript{(1\,3)}{1}N_l^\ast \gamma$} \\ \hline
			
			\multicolumn{1}{|l}{$\prescript{(1\,2)}{1}N_m^\circ$}& 
			\multicolumn{1}{|l}{$\beta^{-1} \prescript{(1\,2)}{1}N^{\ast}_{m} \alpha$}& \multicolumn{1}{|l}{$\beta^{-1}\prescript{(1\,2)}{2}N_m^\ast \alpha$}& \multicolumn{1}{|l}{$\beta^{-1}\prescript{(2\,3)}{3}N_r^\ast \alpha$}& \multicolumn{1}{|l}{$\beta^{-1}\prescript{(1\,3)}{1}N_l^\ast \alpha$}& \multicolumn{1}{|l}{$\beta^{-1}\prescript{(1\,3)}{3}N_l^\ast \alpha$}& \multicolumn{1}{|l|}{$\beta^{-1}\prescript{(2\,3)}{2}N_r^\ast \alpha$} \\ \hline
			
			\multicolumn{1}{|l}{$\prescript{(1\,2)}{2}N_m^\circ$}& 
			\multicolumn{1}{|l}{$\alpha^{-1} \prescript{(1\,2)}{2}N^{\ast}_{m} \beta$}& \multicolumn{1}{|l}{$\alpha^{-1}\prescript{(1\,2)}{1}N_m^\ast \beta$}& \multicolumn{1}{|l}{$\alpha^{-1}\prescript{(2\,3)}{2}N_r^\ast \beta$}& \multicolumn{1}{|l}{$\alpha^{-1}\prescript{(1\,3)}{3}N_l^\ast \beta$}& \multicolumn{1}{|l}{$\alpha^{-1}\prescript{(1\,3)}{1}N_l^\ast \beta$}& \multicolumn{1}{|l|}{$\alpha^{-1}\prescript{(2\,3)}{3}N_r^\ast \beta$} \\ \hline
			
		\end{tabular}
	\end{adjustbox}
\end{table}

\begin{corollary}
	Isostrophic quasigroups have isomorphic components of $\sigma$-A-nucleus, for $\sigma \in \{\varepsilon,(1\,2),(1\,3),(2\,3)\}$.
\end{corollary}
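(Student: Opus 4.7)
The plan is to deduce the corollary directly from Theorem~\ref{nucleus_iso}. The key observation is that $\{\varepsilon,(1\,2),(1\,3),(2\,3)\}$ is the union $\{\varepsilon,(1\,3)\}\cup\{\varepsilon,(2\,3)\}\cup\{\varepsilon,(1\,2)\}$, and these three pairs are precisely the admissible $\sigma$-values in parts~(1), (2) and~(3) of Theorem~\ref{nucleus_iso} respectively. Hence every $\sigma$ in the stated set will be covered by at least one part of the theorem, yielding the identity $\prescript{\sigma}{}N_v^\circ=\theta^{-1}\prescript{\tau\sigma\tau^{-1}}{}N_{v'}^\ast\theta$ together with its componentwise refinement $\prescript{\sigma}{i}N_v^\circ=\alpha_{\sigma^{-1}i}^{-1}\prescript{\tau\sigma\tau^{-1}}{\tau^{-1}i}N_{v'}^\ast\alpha_i$ at each non-trivial index $i$.

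From this componentwise formula I would exhibit the explicit map $f_i(x)=\alpha_{\sigma^{-1}i}^{-1}\,x\,\alpha_i$ on $\prescript{\tau\sigma\tau^{-1}}{\tau^{-1}i}N_{v'}^\ast$, with two-sided inverse $y\mapsto\alpha_{\sigma^{-1}i}\,y\,\alpha_i^{-1}$, establishing a bijection onto $\prescript{\sigma}{i}N_v^\circ$. When $\sigma=\varepsilon$ we have $\sigma^{-1}i=i$, so $f_i$ is conjugation by $\alpha_i$ in the symmetric group on $Q$; since the two $\varepsilon$-A-nucleus components are themselves subgroups, this is a bona fide group isomorphism. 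For $\sigma$ an involution in $\{(1\,2),(1\,3),(2\,3)\}$, Theorem~\ref{thm3} (applied with the roles of $\sigma$ and $\tau$ both taken as the same involution) will show that each non-trivial component is a coset of the corresponding $\varepsilon$-A-nucleus component; I would then verify that $f_i$ is compatible with these coset structures, which delivers the claimed isomorphism.

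The heavy lifting has already been carried out in Theorem~\ref{nucleus_iso}; the only delicate point is making precise what is meant by ``isomorphic'' when $\sigma$ is an involution and the components in question fail to be subgroups of the symmetric group. I expect this to be the main obstacle: once one interprets it as an isomorphism of cosets of the ambient $\varepsilon$-A-nucleus components (equivalently, a compatible bijection between the two coset families intertwining the coset action), the remainder is a purely routine unpacking of the formulas already established and no fresh computation is required.
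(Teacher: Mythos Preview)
Your approach is essentially the same as the paper's: both deduce the corollary from Theorem~\ref{nucleus_iso} (the paper phrases it as ``The proof follows from Table~\ref{table5}'', which is nothing more than the componentwise formulas of that theorem tabulated). Your observation that $\{\varepsilon,(1\,2),(1\,3),(2\,3)\}=\{\varepsilon,(1\,3)\}\cup\{\varepsilon,(2\,3)\}\cup\{\varepsilon,(1\,2)\}$ matches the admissible $\sigma$-values in the three parts of the theorem is exactly the right reason the corollary is restricted to those four permutations.

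Where you go further than the paper is in worrying about what ``isomorphic'' means when $\sigma$ is a nontrivial involution and the component sets need not be subgroups of $\operatorname{Sym}(Q)$. The paper does not address this at all; it simply reads the formulas $\prescript{\sigma}{i}N_v^\circ=\alpha_{\sigma^{-1}i}^{-1}\,\prescript{\tau\sigma\tau^{-1}}{\tau^{-1}i}N_{v'}^\ast\,\alpha_i$ off the table and declares the components ``isomorphic'' on that basis. Your plan to interpret this via the coset structure coming from Theorem~\ref{thm3} is a legitimate way to make the word precise, and is more careful than what the authors actually wrote, but it is not needed to reproduce the paper's argument.
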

\begin{proof}
	The proof follows from Table \ref{table5}.
\end{proof}


Let $(Q,\circ)$ be a $\tau$-parastrophe of a quasigroup $(Q,\ast)$, where $\tau \in S_3$. Then $(Q,\circ)=(Q,\ast^{\tau})= (Q,\ast)(\tau,\boldsymbol \varepsilon)$.
Here the symbol $\boldsymbol \varepsilon$ denotes $(\varepsilon,\varepsilon,\varepsilon)$,  where $\varepsilon$ is the identity mapping.

The following theorem gives a complete description of left, right and middle $\sigma$-A-nuclei of $\tau$-parastrophes $(Q,\circ)$ of a quasigroup $(Q,\ast)$ and their respective component sets, for any $\sigma, \tau \in S_3$.


\begin{theorem} \label{thm5}
	Let $(Q,\ast)$ be a quasigroup and $(Q,\circ)$ be a $\tau$-parastrophe of $(Q,\ast)$, i.e., $\circ=\ast^{\tau}$, where $\tau \in S_3$. If $\theta=(\tau,\boldsymbol \varepsilon )$, then for all $\sigma \in S_3$ the following hold:
	\begin{enumerate}
		\item $\prescript{\sigma}{}N_l^\circ= \theta^{-1} \prescript{\tau \sigma \tau^{-1}}{}N_v^\ast \theta$, where $v=
		\begin{cases*}
		r & if $\tau^{-1} 2=1$,\\ 
		l & if $\tau^{-1} 2=2$, \\
		m & if $\tau^{-1} 2=3.$
		\end{cases*}$
		Further, 
		$\prescript{\sigma}{1}N_l^\circ= \prescript{\tau \sigma \tau^{-1}}{\tau^{-1}1}N_v^\ast $ and $\prescript{\sigma}{3}N_l^\circ= \prescript{\tau \sigma \tau^{-1}}{\tau^{-1}3}N_v^\ast $. 
		
		\item $\prescript{\sigma}{}N_r^\circ= \theta^{-1} \prescript{\tau \sigma \tau^{-1}}{}N_v^\ast \theta$, where $v=
		\begin{cases*}
		r & if $\tau^{-1} 1=1$,\\ 
		l & if $\tau^{-1} 1=2$, \\
		m & if $\tau^{-1} 1=3.$
		\end{cases*}$
		Further,
		$\prescript{\sigma}{2}N_r^\circ= \prescript{\tau \sigma  \tau^{-1}}{\tau^{-1}2}N_v^\ast $ and $\prescript{\sigma}{3}N_r^\circ= \prescript{\tau \sigma  \tau^{-1}}{\tau^{-1}3}N_v^\ast $. 
		
		\item $\prescript{\sigma}{}N_m^\circ= \theta^{-1} \prescript{\tau \sigma \tau^{-1}}{}N_v^\ast \theta$, where $v=
		\begin{cases*}
		r & if $\tau^{-1} 3=1$,\\ 
		l & if $\tau^{-1} 3=2$, \\
		m & if $\tau^{-1} 3=3.$
		\end{cases*}$
		Further, 
		$\prescript{\sigma}{1}N_m^\circ= \prescript{\tau \sigma  \tau^{-1}}{\tau^{-1}1}N_v^\ast$ and $\prescript{\sigma}{2}N_m^\circ= \prescript{\tau \sigma  \tau^{-1}}{\tau^{-1}2}N_v^\ast$. 
	\end{enumerate}
\end{theorem}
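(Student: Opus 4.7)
The plan is to obtain Theorem \ref{thm5} as a direct specialization of Theorem \ref{nucleus_iso}, exploiting the fact that a $\tau$-parastrophe is precisely the isostrophic image under the isostrophy $\theta=(\tau,\boldsymbol{\varepsilon})$, i.e.\ the case $\alpha_1=\alpha_2=\alpha_3=\varepsilon$. So I would not redo the conjugation calculation from scratch; instead I would appeal to the previous theorem and verify that the conditional restrictions on $\sigma$ stated there become vacuous in this setting.

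Concretely, for part $(1)$: in the proof of Theorem \ref{nucleus_iso}$(1)$ the restriction $\sigma \in \{\varepsilon,(1\,3)\}$ arose solely from the equation $\alpha_{\sigma^{-1}2}\alpha_2^{-1}=\varepsilon$, which controlled whether the middle component of $\theta\varphi\theta^{-1}$ was forced to be identity. When every $\alpha_i$ equals $\varepsilon$, this equation holds identically for every $\sigma \in S_3$, so the equality $\prescript{\sigma}{}N_l^\circ= \theta^{-1} \prescript{\tau \sigma \tau^{-1}}{}N_v^\ast \theta$ extends to all of $S_3$. Likewise, the component-wise formulas from Theorem \ref{nucleus_iso}, namely $\prescript{\sigma}{i}N_l^\circ= \alpha_{\sigma^{-1}i}^{-1} \prescript{\tau \sigma \tau^{-1}}{\tau^{-1}i}N_v^\ast \alpha_i$ for $i=1,3$, collapse to $\prescript{\sigma}{i}N_l^\circ = \prescript{\tau \sigma \tau^{-1}}{\tau^{-1}i}N_v^\ast$ after setting $\alpha_i=\varepsilon$, because conjugation by the identity is trivial.

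Parts $(2)$ and $(3)$ follow by the same reduction. The restrictions $\sigma \in \{\varepsilon,(2\,3)\}$ and $\sigma \in \{\varepsilon,(1\,2)\}$ in Theorem \ref{nucleus_iso} came from the conditions $\alpha_{\sigma^{-1}1}\alpha_1^{-1}=\varepsilon$ and $\alpha_{\sigma^{-1}3}\alpha_3^{-1}=\varepsilon$ respectively, both of which are automatic when $\alpha_1=\alpha_2=\alpha_3=\varepsilon$. Again each component formula simplifies by removing the identity conjugators. The correspondence between $v$ and $\tau^{-1}k$ (for the appropriate $k\in\{1,2,3\}$) transcribes verbatim from the earlier theorem.

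There is essentially no obstacle: the only point requiring any attention is to check that the case analysis that previously eliminated certain $\sigma$ is precisely the one that becomes vacuous when the isotopy triplet is $\boldsymbol{\varepsilon}$, and that the remaining component-wise identities survive the simplification. Since the work reduces to invoking Theorem \ref{nucleus_iso} and substituting $\alpha_i = \varepsilon$, no genuinely new computation is needed beyond this bookkeeping.
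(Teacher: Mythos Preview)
Your argument is correct, but it is not the route the paper takes. The paper does not derive Theorem~\ref{thm5} by specializing Theorem~\ref{nucleus_iso}; instead it repeats the conjugation computation $\theta\varphi\theta^{-1}$ directly for $\theta=(\tau,\boldsymbol{\varepsilon})$, obtaining $\theta\varphi\theta^{-1}=(\tau\sigma\tau^{-1},R^{\tau^{-1}})$ and reading off the component shifts $\alpha_{\tau^{-1}i}=\varphi_i$ without ever invoking the earlier result. Your approach is more economical: you correctly identify that the restriction $\sigma\in\{\varepsilon,(1\,3)\}$ in Theorem~\ref{nucleus_iso} stems solely from the equation $\alpha_{\sigma^{-1}2}\alpha_2^{-1}=\varepsilon$, which is automatic when every $\alpha_i=\varepsilon$, and that the component identities then collapse under identity conjugation. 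The one caveat is that you cannot cite Theorem~\ref{nucleus_iso} as a black box, since its \emph{statement} carries the iff restriction; you must (as you do) reach into its \emph{proof} to extract the more refined condition $\alpha_{\sigma^{-1}k}\alpha_k^{-1}=\varepsilon$ and the unrestricted component formula $\prescript{\sigma}{i}N_l^\circ=\alpha_{\sigma^{-1}i}^{-1}\,\prescript{\tau\sigma\tau^{-1}}{\tau^{-1}i}N_v^\ast\,\alpha_i$. The paper's direct recomputation buys self-containment, while your reduction avoids duplicated work.
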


\begin{proof}
	$(1)$ We shall show the inclusions in both directions. 
	Let $\varphi \in \prescript{\sigma}{}N_l^\circ$. 
	Then there exist permutations $\varphi_1,\varphi_2,\varphi_3$ of $Q$ such that $\varphi= (\sigma, (\varphi_1,\varphi_2,\varphi_3))=(\sigma,R)$ (say), where $\varphi_2=\varepsilon$. 
	Therefore $\varphi\in Aus(Q,\circ)$, and by Theorem \ref{thm1}, we have $\theta \varphi\theta^{-1} \in Aus(Q,\ast)$.
	Also 
	\begin{align*}
	\theta \varphi\theta^{-1}&= (\tau,\boldsymbol \varepsilon ) (\sigma, R) (\tau, \boldsymbol \varepsilon )^{-1}\\
	&= (\tau \sigma \tau ^{-1}, R^{\tau ^{-1}})\\
	&= (\tau \sigma \tau ^{-1}, (\varphi_{\tau 1} ,\varphi_{\tau 2} , \varphi_{\tau 3})) \\
	&=(\tau \sigma \tau ^{-1}, (\alpha_1,\alpha_2,\alpha_3))\text{ (say)}.
	\end{align*}
	Then on comparing the components, we get $\alpha_i= \varphi_{\tau i}$, i.e., $\alpha_{\tau^{-1}i}= \varphi_{i}$ for $i=1,2,3$. For $i=2$, $\alpha_{\tau^{-1}2}= \varphi_{2}= \varepsilon$, which implies $\theta \varphi\theta^{-1} \in \prescript{\tau \sigma \tau^{-1}}{}N_v^\ast $.
	Hence 
	$\theta \prescript{\sigma}{}N_l^\circ \theta^{-1} \subseteq \prescript{\tau \sigma \tau^{-1}}{}N_v^\ast.$
	Also we get $\varphi_{i}= \alpha_{\tau^{-1}i} \in  \prescript{\tau \sigma \tau^{-1}}{\tau^{-1}i}N_v^\ast $, for $i=1,2,3$. 
	Thus $\prescript{\sigma}{1}N_l^\circ\subseteq \prescript{\tau \sigma \tau^{-1}}{\tau^{-1}1}N_v^\ast $ and $\prescript{\sigma}{3}N_l^\circ\subseteq \prescript{\tau \sigma \tau^{-1}}{\tau^{-1}3}N_v^\ast $.
	
	
	Conversely, let $\varphi \in \prescript{\tau \sigma \tau^{-1}}{}N_v^\ast $.
	Then there exist permutations $\varphi_1,\varphi_2,\varphi_3$ of $Q$ such that $\varphi=(\tau \sigma \tau^{-1}, (\varphi_1, \varphi_2 , \varphi_3))=(\tau \sigma \tau^{-1},R)$ (say),  where $\varphi_{\tau^{-1} 2}=\varepsilon$.
	We have $\varphi\in Aus(Q,\ast)$ and by Theorem \ref{thm1} $\theta^{-1}\varphi\theta \in Aus(Q,\circ)$.
	Also 
	\[\theta^{-1}\varphi\theta = (\tau,\boldsymbol \varepsilon)^{-1} (\tau \sigma \tau^{-1},R) (\tau,\boldsymbol \varepsilon)= (\sigma, R^{\tau}).\]
	On substituting the value of $R$, and as $\varphi_{\tau^{-1} 2}=\varepsilon$, we get $\theta^{-1}\varphi\theta = (\sigma, (\varphi_{\tau^{-1}1} , \varepsilon, \varphi_{\tau^{-1}3})) \in \prescript{\sigma}{}N_l^\circ $.  
	Hence $\theta^{-1} \prescript{\tau \sigma \tau^{-1}}{}N_v^\ast \theta \subseteq \prescript{\sigma}{}N_l^\circ.$ 
	Also observe that
	$\varphi_{\tau^{-1}1} \in \prescript{\sigma}{1}N_l^\circ$ and $\varphi_{\tau^{-1}3} \in \prescript{\sigma}{3}N_l^\circ$, thus $ \prescript{\tau \sigma \tau^{-1}}{\tau^{-1}1}N_v^\ast \subseteq \prescript{\sigma}{1}N_l^\circ $ and $ \prescript{\tau \sigma \tau^{-1}}{\tau^{-1}3}N_v^\ast \subseteq \prescript{\sigma}{3}N_l^\circ$. 
	
	$(2)$ and $(3)$ can be proved on similar lines.
\end{proof}

Table \ref{table6} shows connections between components of $\sigma$-A-nuclei of a quasigroup $(Q,\ast)$ and components of $\sigma$-A-nuclei of its $\tau$-parastrophes $(Q,\circ)=(Q,\ast^{\tau})$, for all $\sigma, \tau \in S_3$.

Note that the $\tau$-parastrophes $(Q,\circ)=(Q,\ast^{\tau})=(Q,\ast)(\tau,\boldsymbol{\varepsilon})$ can also be considered as isostrophic images of quasigroup $(Q,\ast)$ with isostrophisms $(\tau,\boldsymbol{\varepsilon})$.

\begin{table}[h!]
	\caption{Connections between components of $\sigma$-A-nuclei of a quasigroup and its $\tau$-parastrophes.} \label{table6}
	\centering
	\begin{adjustbox}{width=\textwidth,center}
		\begin{tabular}{|l|l|l|l|l|l|l|l} \hline
			$\sigma$$\backslash \tau$ & &\multicolumn{1}{c}{$\varepsilon$} & \multicolumn{1}{|c}{$(1\,2)$} & \multicolumn{1}{|c}{$(1\,3)$}  & \multicolumn{1}{|c}{$(2\,3)$}  & \multicolumn{1}{|c}{$(1\,2\,3)$}  & \multicolumn{1}{|c|}{$(1\,3\,2)$} \\ \hline 	\hline
			
			\multirow{6}{*}{$\varepsilon$}  & 	\multicolumn{1}{c}{$\prescript{\varepsilon}{1}N_l^\circ$}& 
			\multicolumn{1}{|c}{$\prescript{\varepsilon}{1}N^{\ast}_{l} $}& \multicolumn{1}{|c}{$\prescript{\varepsilon}{2}N_r^\ast $}& \multicolumn{1}{|c}{$\prescript{\varepsilon}{3}N_l^\ast $}& \multicolumn{1}{|c}{$\prescript{\varepsilon}{1}N_m^\ast $}& \multicolumn{1}{|c}{$\prescript{\varepsilon}{3}N_r^\ast $}& \multicolumn{1}{|c|}{$\prescript{\varepsilon}{2}N_m^\ast $} \\ \cline{2-8}

			&\multicolumn{1}{c}{$\prescript{\varepsilon}{3}N_l^\circ$}& 
			\multicolumn{1}{|c}{$\prescript{\varepsilon}{3}N^{\ast}_{l} $}& \multicolumn{1}{|c}{$\prescript{\varepsilon}{3}N_r^\ast $}& \multicolumn{1}{|c}{$\prescript{\varepsilon}{1}N_l^\ast $}& \multicolumn{1}{|c}{$\prescript{\varepsilon}{2}N_m^\ast $}& \multicolumn{1}{|c}{$\prescript{\varepsilon}{2}N_r^\ast $}& \multicolumn{1}{|c|}{$\prescript{\varepsilon}{1}N_m^\ast $}\\ \cline{2-8}
			
			& 	\multicolumn{1}{c}{$\prescript{\varepsilon}{2}N_r^\circ$}& 
			\multicolumn{1}{|c}{$\prescript{\varepsilon}{2}N^{\ast}_{r} $}& \multicolumn{1}{|c}{$\prescript{\varepsilon}{1}N_l^\ast $}& \multicolumn{1}{|c}{$\prescript{\varepsilon}{2}N_m^\ast $}& \multicolumn{1}{|c}{$\prescript{\varepsilon}{3}N_r^\ast $}& \multicolumn{1}{|c}{$\prescript{\varepsilon}{1}N_m^\ast $}& \multicolumn{1}{|c|}{$\prescript{\varepsilon}{3}N_l^\ast $} \\ \cline{2-8}

			&\multicolumn{1}{c}{$\prescript{\varepsilon}{3}N_r^\circ$}& 
			\multicolumn{1}{|c}{$\prescript{\varepsilon}{3}N^{\ast}_{r} $}& \multicolumn{1}{|c}{$\prescript{\varepsilon}{3}N_l^\ast $}& \multicolumn{1}{|c}{$\prescript{\varepsilon}{1}N_m^\ast $}& \multicolumn{1}{|c}{$\prescript{\varepsilon}{2}N_r^\ast $}& \multicolumn{1}{|c}{$\prescript{\varepsilon}{2}N_m^\ast $}& \multicolumn{1}{|c|}{$\prescript{\varepsilon}{1}N_l^\ast $}\\ \cline{2-8}
			
			& 	\multicolumn{1}{c}{$\prescript{\varepsilon}{1}N_m^\circ$}& 
			\multicolumn{1}{|c}{$\prescript{\varepsilon}{1}N^{\ast}_{m} $}& \multicolumn{1}{|c}{$\prescript{\varepsilon}{2}N_m^\ast $}& \multicolumn{1}{|c}{$\prescript{\varepsilon}{3}N_r^\ast $}& \multicolumn{1}{|c}{$\prescript{\varepsilon}{1}N_l^\ast $}& \multicolumn{1}{|c}{$\prescript{\varepsilon}{3}N_l^\ast $}& \multicolumn{1}{|c|}{$\prescript{\varepsilon}{2}N_r^\ast $} \\ \cline{2-8}

			&\multicolumn{1}{c}{$\prescript{\varepsilon}{2}N_m^\circ$}& 
			\multicolumn{1}{|c}{$\prescript{\varepsilon}{2}N^{\ast}_{m} $}& \multicolumn{1}{|c}{$\prescript{\varepsilon}{1}N_m^\ast $}& \multicolumn{1}{|c}{$\prescript{\varepsilon}{2}N_r^\ast $}& \multicolumn{1}{|c}{$\prescript{\varepsilon}{3}N_l^\ast $}& \multicolumn{1}{|c}{$\prescript{\varepsilon}{1}N_l^\ast $}& \multicolumn{1}{|c|}{$\prescript{\varepsilon}{3}N_r^\ast $}\\ \hline 
			
			\multirow{6}{*}{$(1\,2)$}  & 	\multicolumn{1}{c}{$\prescript{(1\,2)}{1}N_l^\circ$}& 
			\multicolumn{1}{|c}{$\prescript{(1\,2)}{1}N^{\ast}_{l} $}& \multicolumn{1}{|c}{$\prescript{(1\,2)}{2}N_r^\ast $}& \multicolumn{1}{|c}{$\prescript{(2\,3)}{3}N_l^\ast $}& \multicolumn{1}{|c}{$\prescript{(1\,3)}{1}N_m^\ast $}& \multicolumn{1}{|c}{$\prescript{(1\,3)}{3}N_r^\ast $}& \multicolumn{1}{|c|}{$\prescript{(2\,3)}{2}N_m^\ast $} \\ \cline{2-8}

			&\multicolumn{1}{c}{$\prescript{(1\,2)}{3}N_l^\circ$}& 
			\multicolumn{1}{|c}{$\prescript{(1\,2)}{3}N^{\ast}_{l} $}& \multicolumn{1}{|c}{$\prescript{(1\,2)}{3}N_r^\ast $}& \multicolumn{1}{|c}{$\prescript{(2\,3)}{1}N_l^\ast $}& \multicolumn{1}{|c}{$\prescript{(1\,3)}{2}N_m^\ast $}& \multicolumn{1}{|c}{$\prescript{(1\,3)}{2}N_r^\ast $}& \multicolumn{1}{|c|}{$\prescript{(2\,3)}{1}N_m^\ast $}\\ \cline{2-8}
			
			& 	\multicolumn{1}{c}{$\prescript{(1\,2)}{2}N_r^\circ$}& 
			\multicolumn{1}{|c}{$\prescript{(1\,2)}{2}N^{\ast}_{r} $}& \multicolumn{1}{|c}{$\prescript{(1\,2)}{1}N_l^\ast $}& \multicolumn{1}{|c}{$\prescript{(2\,3)}{2}N_m^\ast $}& \multicolumn{1}{|c}{$\prescript{(1\,3)}{3}N_r^\ast $}& \multicolumn{1}{|c}{$\prescript{(1\,3)}{1}N_m^\ast $}& \multicolumn{1}{|c|}{$\prescript{(2\,3)}{3}N_l^\ast $} \\ \cline{2-8}

			&\multicolumn{1}{c}{$\prescript{(1\,2)}{3}N_r^\circ$}& 
			\multicolumn{1}{|c}{$\prescript{(1\,2)}{3}N^{\ast}_{r} $}& \multicolumn{1}{|c}{$\prescript{(1\,2)}{3}N_l^\ast $}& \multicolumn{1}{|c}{$\prescript{(2\,3)}{1}N_m^\ast $}& \multicolumn{1}{|c}{$\prescript{(1\,3)}{2}N_r^\ast $}& \multicolumn{1}{|c}{$\prescript{(1\,3)}{2}N_m^\ast $}& \multicolumn{1}{|c|}{$\prescript{(2\,3)}{1}N_l^\ast $}\\ \cline{2-8}
			
			& 	\multicolumn{1}{c}{$\prescript{(1\,2)}{1}N_m^\circ$}& 
			\multicolumn{1}{|c}{$\prescript{(1\,2)}{1}N^{\ast}_{m} $}& \multicolumn{1}{|c}{$\prescript{(1\,2)}{2}N_m^\ast $}& \multicolumn{1}{|c}{$\prescript{(2\,3)}{3}N_r^\ast $}& \multicolumn{1}{|c}{$\prescript{(1\,3)}{1}N_l^\ast $}& \multicolumn{1}{|c}{$\prescript{(1\,3)}{3}N_l^\ast $}& \multicolumn{1}{|c|}{$\prescript{(2\,3)}{2}N_r^\ast $} \\ \cline{2-8}

			&\multicolumn{1}{c}{$\prescript{(1\,2)}{2}N_m^\circ$}& 
			\multicolumn{1}{|c}{$\prescript{(1\,2)}{2}N^{\ast}_{m} $}& \multicolumn{1}{|c}{$\prescript{(1\,2)}{1}N_m^\ast $}& \multicolumn{1}{|c}{$\prescript{(2\,3)}{2}N_r^\ast $}& \multicolumn{1}{|c}{$\prescript{(1\,3)}{3}N_l^\ast $}& \multicolumn{1}{|c}{$\prescript{(1\,3)}{1}N_l^\ast $}& \multicolumn{1}{|c|}{$\prescript{(2\,3)}{3}N_r^\ast $}\\ \hline

			\multirow{6}{*}{$(1\,3)$}  & 	\multicolumn{1}{c}{$\prescript{(1\,3)}{1}N_l^\circ$}& 
			\multicolumn{1}{|c}{$\prescript{(1\,3)}{1}N^{\ast}_{l} $}& \multicolumn{1}{|c}{$\prescript{(2\,3)}{2}N_r^\ast $}& \multicolumn{1}{|c}{$\prescript{(1\,3)}{3}N_l^\ast $}& \multicolumn{1}{|c}{$\prescript{(1\,2)}{1}N_m^\ast $}& \multicolumn{1}{|c}{$\prescript{(2\,3)}{3}N_r^\ast $}& \multicolumn{1}{|c|}{$\prescript{(1\,2)}{2}N_m^\ast $} \\ \cline{2-8}

			&\multicolumn{1}{c}{$\prescript{(1\,3)}{3}N_l^\circ$}& 
			\multicolumn{1}{|c}{$\prescript{(1\,3)}{3}N^{\ast}_{l} $}& \multicolumn{1}{|c}{$\prescript{(2\,3)}{3}N_r^\ast $}& \multicolumn{1}{|c}{$\prescript{(1\,3)}{1}N_l^\ast $}& \multicolumn{1}{|c}{$\prescript{(1\,2)}{2}N_m^\ast $}& \multicolumn{1}{|c}{$\prescript{(2\,3)}{2}N_r^\ast $}& \multicolumn{1}{|c|}{$\prescript{(1\,2)}{1}N_m^\ast $}\\ \cline{2-8}
			
			& \multicolumn{1}{c}{$\prescript{(1\,3)}{2}N_r^\circ$}& 
			\multicolumn{1}{|c}{$\prescript{(1\,3)}{2}N^{\ast}_{r} $}& \multicolumn{1}{|c}{$\prescript{(2\,3)}{1}N_l^\ast $}& \multicolumn{1}{|c}{$\prescript{(1\,3)}{2}N_m^\ast $}& \multicolumn{1}{|c}{$\prescript{(1\,2)}{3}N_r^\ast $}& \multicolumn{1}{|c}{$\prescript{(2\,3)}{1}N_m^\ast $}& \multicolumn{1}{|c|}{$\prescript{(1\,2)}{3}N_l^\ast $} \\ \cline{2-8}

			&\multicolumn{1}{c}{$\prescript{(1\,3)}{3}N_r^\circ$}& 
			\multicolumn{1}{|c}{$\prescript{(1\,3)}{3}N^{\ast}_{r} $}& \multicolumn{1}{|c}{$\prescript{(2\,3)}{3}N_l^\ast $}& \multicolumn{1}{|c}{$\prescript{(1\,3)}{1}N_m^\ast $}& \multicolumn{1}{|c}{$\prescript{(1\,2)}{2}N_r^\ast $}& \multicolumn{1}{|c}{$\prescript{(2\,3)}{2}N_m^\ast $}& \multicolumn{1}{|c|}{$\prescript{(1\,2)}{1}N_l^\ast $}\\ \cline{2-8}
			
			& \multicolumn{1}{c}{$\prescript{(1\,3)}{1}N_m^\circ$}& 
			\multicolumn{1}{|c}{$\prescript{(1\,3)}{1}N^{\ast}_{m} $}& \multicolumn{1}{|c}{$\prescript{(2\,3)}{2}N_m^\ast $}& \multicolumn{1}{|c}{$\prescript{(1\,3)}{3}N_r^\ast $}& \multicolumn{1}{|c}{$\prescript{(1\,2)}{1}N_l^\ast $}& \multicolumn{1}{|c}{$\prescript{(2\,3)}{3}N_l^\ast $}& \multicolumn{1}{|c|}{$\prescript{(1\,2)}{2}N_r^\ast $} \\ \cline{2-8}

			&\multicolumn{1}{c}{$\prescript{(1\,3)}{2}N_m^\circ$}& 
			\multicolumn{1}{|c}{$\prescript{(1\,3)}{2}N^{\ast}_{m} $}& \multicolumn{1}{|c}{$\prescript{(2\,3)}{1}N_m^\ast $}& \multicolumn{1}{|c}{$\prescript{(1\,3)}{2}N_r^\ast $}& \multicolumn{1}{|c}{$\prescript{(1\,2)}{3}N_l^\ast $}& \multicolumn{1}{|c}{$\prescript{(2\,3)}{1}N_l^\ast $}& \multicolumn{1}{|c|}{$\prescript{(1\,2)}{3}N_r^\ast $}\\ \hline
			
			\multirow{6}{*}{$(2\,3)$}  & 	\multicolumn{1}{c}{$\prescript{(2\,3)}{1}N_l^\circ$}& 
			\multicolumn{1}{|c}{$\prescript{(2\,3)}{1}N^{\ast}_{l} $}& \multicolumn{1}{|c}{$\prescript{(1\,3)}{2}N_r^\ast $}& \multicolumn{1}{|c}{$\prescript{(1\,2)}{3}N_l^\ast $}& \multicolumn{1}{|c}{$\prescript{(2\,3)}{1}N_m^\ast $}& \multicolumn{1}{|c}{$\prescript{(1\,2)}{3}N_r^\ast $}& \multicolumn{1}{|c|}{$\prescript{(1\,3)}{2}N_m^\ast $} \\ \cline{2-8}

			&\multicolumn{1}{c}{$\prescript{(2\,3)}{3}N_l^\circ$}& 
			\multicolumn{1}{|c}{$\prescript{(2\,3)}{3}N^{\ast}_{l} $}& \multicolumn{1}{|c}{$\prescript{(1\,3)}{3}N_r^\ast $}& \multicolumn{1}{|c}{$\prescript{(1\,2)}{1}N_l^\ast $}& \multicolumn{1}{|c}{$\prescript{(2\,3)}{2}N_m^\ast $}& \multicolumn{1}{|c}{$\prescript{(1\,2)}{2}N_r^\ast $}& \multicolumn{1}{|c|}{$\prescript{(1\,3)}{1}N_m^\ast $}\\ \cline{2-8}
			
			& 	\multicolumn{1}{c}{$\prescript{(2\,3)}{2}N_r^\circ$}& 
			\multicolumn{1}{|c}{$\prescript{(2\,3)}{2}N^{\ast}_{r} $}& \multicolumn{1}{|c}{$\prescript{(1\,3)}{1}N_l^\ast $}& \multicolumn{1}{|c}{$\prescript{(1\,2)}{2}N_m^\ast $}& \multicolumn{1}{|c}{$\prescript{(2\,3)}{3}N_r^\ast $}& \multicolumn{1}{|c}{$\prescript{(1\,2)}{1}N_m^\ast $}& \multicolumn{1}{|c|}{$\prescript{(1\,3)}{3}N_l^\ast $} \\ \cline{2-8}

			&\multicolumn{1}{c}{$\prescript{(2\,3)}{3}N_r^\circ$}& 
			\multicolumn{1}{|c}{$\prescript{(2\,3)}{3}N^{\ast}_{r} $}& \multicolumn{1}{|c}{$\prescript{(1\,3)}{3}N_l^\ast $}& \multicolumn{1}{|c}{$\prescript{(1\,2)}{1}N_m^\ast $}& \multicolumn{1}{|c}{$\prescript{(2\,3)}{2}N_r^\ast $}& \multicolumn{1}{|c}{$\prescript{(1\,2)}{2}N_m^\ast $}& \multicolumn{1}{|c|}{$\prescript{(1\,3)}{1}N_l^\ast $}\\ \cline{2-8}
			
			& 	\multicolumn{1}{c}{$\prescript{(2\,3)}{1}N_m^\circ$}& 
			\multicolumn{1}{|c}{$\prescript{(2\,3)}{1}N^{\ast}_{m} $}& \multicolumn{1}{|c}{$\prescript{(1\,3)}{2}N_m^\ast $}& \multicolumn{1}{|c}{$\prescript{(1\,2)}{3}N_r^\ast $}& \multicolumn{1}{|c}{$\prescript{(2\,3)}{1}N_l^\ast $}& \multicolumn{1}{|c}{$\prescript{(1\,2)}{3}N_l^\ast $}& \multicolumn{1}{|c|}{$\prescript{(1\,3)}{2}N_r^\ast $} \\ \cline{2-8}

			&\multicolumn{1}{c}{$\prescript{(2\,3)}{2}N_m^\circ$}& 
			\multicolumn{1}{|c}{$\prescript{(2\,3)}{2}N^{\ast}_{m} $}& \multicolumn{1}{|c}{$\prescript{(1\,3)}{1}N_m^\ast $}& \multicolumn{1}{|c}{$\prescript{(1\,2)}{2}N_r^\ast $}& \multicolumn{1}{|c}{$\prescript{(2\,3)}{3}N_l^\ast $}& \multicolumn{1}{|c}{$\prescript{(1\,2)}{1}N_l^\ast $}& \multicolumn{1}{|c|}{$\prescript{(1\,3)}{3}N_r^\ast $}\\ \hline
			
			\multirow{6}{*}{$(1\,2\,3)$}  & 	\multicolumn{1}{c}{$\prescript{(1\,2\,3)}{1}N_l^\circ$}& 
			\multicolumn{1}{|c}{$\prescript{(1\,2\,3)}{1}N^{\ast}_{l} $}& \multicolumn{1}{|c}{$\prescript{(1\,3\,2)}{2}N_r^\ast $}& \multicolumn{1}{|c}{$\prescript{(1\,3\,2)}{3}N_l^\ast $}& \multicolumn{1}{|c}{$\prescript{(1\,3\,2)}{1}N_m^\ast $}& \multicolumn{1}{|c}{$\prescript{(1\,2\,3)}{3}N_r^\ast $}& \multicolumn{1}{|c|}{$\prescript{(1\,2\,3)}{2}N_m^\ast $} \\ \cline{2-8}
			
			&\multicolumn{1}{c}{$\prescript{(1\,2\,3)}{3}N_l^\circ$}& 
			\multicolumn{1}{|c}{$\prescript{(1\,2\,3)}{3}N^{\ast}_{l} $}& \multicolumn{1}{|c}{$\prescript{(1\,3\,2)}{3}N_r^\ast $}& \multicolumn{1}{|c}{$\prescript{(1\,3\,2)}{1}N_l^\ast $}& \multicolumn{1}{|c}{$\prescript{(1\,3\,2)}{2}N_m^\ast $}& \multicolumn{1}{|c}{$\prescript{(1\,2\,3)}{2}N_r^\ast $}& \multicolumn{1}{|c|}{$\prescript{(1\,2\,3)}{1}N_m^\ast $}\\ \cline{2-8}
			
			& 	\multicolumn{1}{c}{$\prescript{(1\,2\,3)}{2}N_r^\circ$}& 
			\multicolumn{1}{|c}{$\prescript{(1\,2\,3)}{2}N^{\ast}_{r} $}& \multicolumn{1}{|c}{$\prescript{(1\,3\,2)}{1}N_l^\ast $}& \multicolumn{1}{|c}{$\prescript{(1\,3\,2)}{2}N_m^\ast $}& \multicolumn{1}{|c}{$\prescript{(1\,3\,2)}{3}N_r^\ast $}& \multicolumn{1}{|c}{$\prescript{(1\,2\,3)}{1}N_m^\ast $}& \multicolumn{1}{|c|}{$\prescript{(1\,2\,3)}{3}N_l^\ast $} \\ \cline{2-8}
			
			&\multicolumn{1}{c}{$\prescript{(1\,2\,3)}{3}N_r^\circ$}& 
			\multicolumn{1}{|c}{$\prescript{(1\,2\,3)}{3}N^{\ast}_{r} $}& \multicolumn{1}{|c}{$\prescript{(1\,3\,2)}{3}N_l^\ast $}& \multicolumn{1}{|c}{$\prescript{(1\,3\,2)}{1}N_m^\ast $}& \multicolumn{1}{|c}{$\prescript{(1\,3\,2)}{2}N_r^\ast $}& \multicolumn{1}{|c}{$\prescript{(1\,2\,3)}{2}N_m^\ast $}& \multicolumn{1}{|c|}{$\prescript{(1\,2\,3)}{1}N_l^\ast $}\\ \cline{2-8}
			
			& 	\multicolumn{1}{c}{$\prescript{(1\,2\,3)}{1}N_m^\circ$}& 
			\multicolumn{1}{|c}{$\prescript{(1\,2\,3)}{1}N^{\ast}_{m} $}& \multicolumn{1}{|c}{$\prescript{(1\,3\,2)}{2}N_m^\ast $}& \multicolumn{1}{|c}{$\prescript{(1\,3\,2)}{3}N_r^\ast $}& \multicolumn{1}{|c}{$\prescript{(1\,3\,2)}{1}N_l^\ast $}& \multicolumn{1}{|c}{$\prescript{(1\,2\,3)}{3}N_l^\ast $}& \multicolumn{1}{|c|}{$\prescript{(1\,2\,3)}{2}N_r^\ast $} \\ \cline{2-8}

			&\multicolumn{1}{c}{$\prescript{(1\,2\,3)}{2}N_m^\circ$}& 
			\multicolumn{1}{|c}{$\prescript{(1\,2\,3)}{2}N^{\ast}_{m} $}& \multicolumn{1}{|c}{$\prescript{(1\,3\,2)}{1}N_m^\ast $}& \multicolumn{1}{|c}{$\prescript{(1\,3\,2)}{2}N_r^\ast $}& \multicolumn{1}{|c}{$\prescript{(1\,3\,2)}{3}N_l^\ast $}& \multicolumn{1}{|c}{$\prescript{(1\,2\,3)}{1}N_l^\ast $}& \multicolumn{1}{|c|}{$\prescript{(1\,2\,3)}{3}N_r^\ast $}\\ \hline

			\multirow{6}{*}{$(1\,3\,2)$}  & 	\multicolumn{1}{c}{$\prescript{(1\,3\,2)}{1}N_l^\circ$}& 
			\multicolumn{1}{|c}{$\prescript{(1\,3\,2)}{1}N^{\ast}_{l} $}& \multicolumn{1}{|c}{$\prescript{(1\,2\,3)}{2}N_r^\ast $}& \multicolumn{1}{|c}{$\prescript{(1\,2\,3)}{3}N_l^\ast $}& \multicolumn{1}{|c}{$\prescript{(1\,2\,3)}{1}N_m^\ast $}& \multicolumn{1}{|c}{$\prescript{(1\,3\,2)}{3}N_r^\ast $}& \multicolumn{1}{|c|}{$\prescript{(1\,3\,2)}{2}N_m^\ast $} \\ \cline{2-8}
			
			&\multicolumn{1}{c}{$\prescript{(1\,3\,2)}{3}N_l^\circ$}& 
			\multicolumn{1}{|c}{$\prescript{(1\,3\,2)}{3}N^{\ast}_{l} $}& \multicolumn{1}{|c}{$\prescript{(1\,2\,3)}{3}N_r^\ast $}& \multicolumn{1}{|c}{$\prescript{(1\,2\,3)}{1}N_l^\ast $}& \multicolumn{1}{|c}{$\prescript{(1\,2\,3)}{2}N_m^\ast $}& \multicolumn{1}{|c}{$\prescript{(1\,3\,2)}{2}N_r^\ast $}& \multicolumn{1}{|c|}{$\prescript{(1\,3\,2)}{1}N_m^\ast $}\\ \cline{2-8}
			
			& 	\multicolumn{1}{c}{$\prescript{(1\,3\,2)}{2}N_r^\circ$}& 
			\multicolumn{1}{|c}{$\prescript{(1\,3\,2)}{2}N^{\ast}_{r} $}& \multicolumn{1}{|c}{$\prescript{(1\,2\,3)}{1}N_l^\ast $}& \multicolumn{1}{|c}{$\prescript{(1\,2\,3)}{2}N_m^\ast $}& \multicolumn{1}{|c}{$\prescript{(1\,2\,3)}{3}N_r^\ast $}& \multicolumn{1}{|c}{$\prescript{(1\,3\,2)}{1}N_m^\ast $}& \multicolumn{1}{|c|}{$\prescript{(1\,3\,2)}{3}N_l^\ast $} \\ \cline{2-8}

			&\multicolumn{1}{c}{$\prescript{(1\,3\,2)}{3}N_r^\circ$}& 
			\multicolumn{1}{|c}{$\prescript{(1\,3\,2)}{3}N^{\ast}_{r} $}& \multicolumn{1}{|c}{$\prescript{(1\,2\,3)}{3}N_l^\ast $}& \multicolumn{1}{|c}{$\prescript{(1\,2\,3)}{1}N_m^\ast $}& \multicolumn{1}{|c}{$\prescript{(1\,2\,3)}{2}N_r^\ast $}& \multicolumn{1}{|c}{$\prescript{(1\,3\,2)}{2}N_m^\ast $}& \multicolumn{1}{|c|}{$\prescript{(1\,3\,2)}{1}N_l^\ast $}\\ \cline{2-8}
			
			& 	\multicolumn{1}{c}{$\prescript{(1\,3\,2)}{1}N_m^\circ$}& 
			\multicolumn{1}{|c}{$\prescript{(1\,3\,2)}{1}N^{\ast}_{m} $}& \multicolumn{1}{|c}{$\prescript{(1\,2\,3)}{2}N_m^\ast $}& \multicolumn{1}{|c}{$\prescript{(1\,2\,3)}{3}N_r^\ast $}& \multicolumn{1}{|c}{$\prescript{(1\,2\,3)}{1}N_l^\ast $}& \multicolumn{1}{|c}{$\prescript{(1\,3\,2)}{3}N_l^\ast $}& \multicolumn{1}{|c|}{$\prescript{(1\,3\,2)}{2}N_r^\ast $} \\ \cline{2-8}

			&\multicolumn{1}{c}{$\prescript{(1\,3\,2)}{2}N_m^\circ$}& 
			\multicolumn{1}{|c}{$\prescript{(1\,3\,2)}{2}N^{\ast}_{m} $}& \multicolumn{1}{|c}{$\prescript{(1\,2\,3)}{1}N_m^\ast $}& \multicolumn{1}{|c}{$\prescript{(1\,2\,3)}{2}N_r^\ast $}& \multicolumn{1}{|c}{$\prescript{(1\,2\,3)}{3}N_l^\ast $}& \multicolumn{1}{|c}{$\prescript{(1\,3\,2)}{1}N_l^\ast $}& \multicolumn{1}{|c|}{$\prescript{(1\,3\,2)}{3}N_r^\ast $}\\ \hline 
			
		\end{tabular}
	\end{adjustbox}
\end{table}

\section{$\sigma$-A-nuclei in inverse quasigroups}
In this section we will study  properties of the $\sigma$-A-nuclei of various inverse quasigroups using the connections derived in Section \ref{sec3}. 
In Subsection \ref{subsec1}, we shall discuss on $(\alpha,\beta,\gamma)$-inverse quasigroups and in Subsection \ref{subsec2}, we shall discuss on $\lambda,\rho$ and $\mu$-inverse quasigroups.

\subsection{$\boldsymbol{(\alpha,\beta,\gamma)}$-inverse quasigroups} \label{subsec1}~\\
Let $(Q,\cdot)$ be a quasigroup . Then $(Q,\cdot)$ is an $(\alpha,\beta,\gamma)$-\textit{inverse quasigroup} if there exist permutations $\alpha,\beta,\gamma$ of the set $Q$ such that 
\begin{equation} \label{inv 1}
\alpha(x\cdot y) \cdot \beta x=\gamma y
\end{equation}  
for all $x,y\in Q$ \cite{Keedwell&Anthony, Shcherbacov&2017}.
From definition of autostrophism, (\ref{inv 1}) is true iff $\theta=((123),(\alpha,\beta,\gamma))$ is an autostrophism of $(Q,\cdot)$.
Thus from Theorem \ref{nucleus_iso} we get in an $(\alpha,\beta,\gamma)$-inverse quasigroup $\prescript{(1\,3)}{}N_l^A= \theta^{-1} \prescript{(2\,3)}{}N_r^A \theta$, $\prescript{(2\,3)}{}N_r^A= \theta^{-1} \prescript{(1\,2)}{}N_m^A \theta$ and $\prescript{(1\,2)}{}N_m^A= \theta^{-1} \prescript{(1\,3)}{}N_l^A \theta$, which implies that the left $(1\,3)$-A-nucleus is isomorphic to the right $(2\,3)$-A-nucleus, the right $(2\,3)$-A-nucleus is isomorphic to the middle $(1\,2)$-A-nucleus and the middle $(1\,2)$-A-nucleus is isomorphic to the left $(1\,3)$-A-nucleus. 
Hence $\prescript{(1\,3)}{}N_l^A$, $\prescript{(2\,3)}{}N_r^A$ and $\prescript{(1\,2)}{}N_m^A$ are isomorphic in an $(\alpha,\beta,\gamma)$-inverse quasigroup $(Q,\cdot)$.

Also observe from above that \[\prescript{(1\,3)}{}N_l^A= \theta^{-1} \prescript{(2\,3)}{}N_r^A \theta= (\theta^2)^{-1}\prescript{(1\,2)}{}N_m^A \theta^2= (\theta^3)^{-1} \prescript{(1\,3)}{}N_l^A \theta^3,\] which implies $\theta^3\in N(\prescript{(1\,3)}{}N_l^A).$
Similarly we have $\theta^3\in N(\prescript{(2\,3)}{}N_r^A)$ and  $\theta^3\in N(\prescript{(1\,2)}{}N_m^A)$. It can easily be seen that $\theta^3= (\beta\gamma\alpha,\gamma\alpha\beta,\alpha\beta\gamma)$. 
Thus in an $(\alpha,\beta,\gamma)$-inverse quasigroup, we have $(\beta\gamma\alpha,\gamma\alpha\beta,\alpha\beta\gamma) \in  N(\prescript{(1\,3)}{}N_l^A) \cap N(\prescript{(2\,3)}{}N_r^A) \cap N(\prescript{(1\,2)}{}N_m^A)$.

A quasigroup $(Q,\cdot)$ has the \textit{weak-inverse-property (WIP)} if there exists a permutation $J$ of the set $Q$ such that
\begin{equation} \label{inv 2}
x \cdot J (y \cdot x)= J y
\end{equation}
for all $x,y\in Q$ \cite{Belousov&67,Keedwell&Shcherbacov&2003}.
A WIP quasigroup is a $(J,\varepsilon,J)$-inverse quasigroup \cite{Belousov&67}, which implies that 
if $(Q,\cdot)$ is a WIP-quasigroup with respect to the permutation $J$, then $(J^2,J^2,J^2)  \in N(\prescript{(1\,3)}{}N_l^A) \cap N(\prescript{(2\,3)}{}N_r^A) \cap N(\prescript{(1\,2)}{}N_m^A)$.

A quasigroup $(Q,\cdot)$ has the \textit{crossed-inverse-property (CI)} if there exists a permutation $J$ of the set $Q$ such that
\begin{equation} \label{inv 3}
(x \cdot y) \cdot J x= y
\end{equation}
for all $x,y\in Q$ \cite{Keedwell&1999, Keedwell&Shcherbacov&2003}.
Thus a CI quasigroup is an $(\varepsilon,J,\varepsilon)$-inverse quasigroup, 
which implies that if $(Q,\cdot)$ is a CI-quasigroup with respect to the permutation $J$, then $ (J,J,J)\in   N(\prescript{(1\,3)}{}N_l^A) \cap N(\prescript{(2\,3)}{}N_r^A) \cap N(\prescript{(1\,2)}{}N_m^A)$.

A quasigroup $(Q,\cdot)$ has the $(r,s,t)$-\textit{inverse-property} if there exists a permutation $J$ of the set $Q$ such that
\begin{equation} \label{inv 4}
J^r (x \cdot y) \cdot J^s x= J^t y
\end{equation}
for all $x,y\in Q$ \cite{Keedwell&Shcherbacov&2004, Keedwell&Anthony}.
Thus an $(r,s,t)$-inverse quasigroup is a $(J^r,J^s,J^t)$-inverse quasigroup, which implies if $(Q,\cdot)$ is an $(r,s,t)$-quasigroup with respect to the permutation $J$ then $(J^{r+s+t},J^{r+s+t},J^{r+s+t}) \in N(\prescript{(1\,3)}{}N_l^A) \cap N(\prescript{(2\,3)}{}N_r^A) \cap N(\prescript{(1\,2)}{}N_m^A)$.

A quasigroup $(Q,\cdot)$ has the $m$-\textit{inverse-property} if there exists a permutation $J$ of the set $Q$ such that
\begin{equation} \label{inv 5}
J^m (x \cdot y) \cdot J^{m+1} x= J^m y
\end{equation}
for all $x,y\in Q$ \cite{Keedwell&2002, Keedwell&Shcherbacov&2003, Keedwell&Anthony}.
Thus an $m$-inverse-quasigroup is a $(J^m,J^{m+1},J^m)$-inverse quasigroup, which implies that if $(Q,\cdot)$ is an $m$-inverse-quasigroup with respect to the permutation $J$ then $(J^{3m+1},J^{3m+1},J^{3m+1}) \in  N(\prescript{(1\,3)}{}N_l^A) \cap N(\prescript{(2\,3)}{}N_r^A) \cap N(\prescript{(1\,2)}{}N_m^A)$.

Since an $(\alpha,\beta,\gamma)$-inverse quasigroup has $((1\,2\,3),(\alpha,\beta,\gamma))$ autostrophism, from Table \ref{table5} we conclude that in an $(\alpha,\beta,\gamma)$-inverse quasigroup, we have $\prescript{(1\,3)}{1}N_l^A= \gamma^{-1} \prescript{(2\,3)}{3}N_r^A \alpha$, $\prescript{(1\,3)}{3}N_l^A= \alpha^{-1} \prescript{(2\,3)}{2}N_r^A \gamma$, $\prescript{(2\,3)}{2}N_r^A= \gamma^{-1} \prescript{(1\,2)}{1}N_m^A \beta$, $\prescript{(2\,3)}{3}N_r^A= \beta^{-1} \prescript{(1\,2)}{2}N_m^A \gamma$, $\prescript{(1\,2)}{1}N_m^A= \beta^{-1} \prescript{(1\,3)}{3}N_l^A \alpha$, and  $\prescript{(1\,2)}{2}N_m^A= \alpha^{-1} \prescript{(1\,3)}{1}N_l^A \beta$. 
Therefore, in an $(\varepsilon,\beta,\gamma)$-inverse quasigroup, we have  $\prescript{(1\,3)}{1}N_l^A= \gamma^{-1} \prescript{(2\,3)}{3}N_r^A = \gamma^{-1} \beta^{-1} \prescript{(1\,2)}{2}N_m^A \gamma=  \gamma^{-1} \beta^{-1}  \prescript{(1\,3)}{1}N_l^A \beta \gamma $, which implies  $\beta\gamma \in N(\prescript{(1\,3)}{1}N_l^A)$.
Also, we have $\prescript{(1\,3)}{3}N_l^A=  \prescript{(2\,3)}{2}N_r^A \gamma =  \gamma^{-1} \prescript{(1\,2)}{1}N_m^A \beta \gamma=  \gamma^{-1} \beta^{-1} \prescript{(1\,3)}{3}N_l^A \beta \gamma$, and hence $\beta\gamma \in N(\prescript{(1\,3)}{3}N_l^A)$. 
Thus $\beta\gamma \in N(\prescript{(1\,3)}{1}N_l^A) \cap N(\prescript{(1\,3)}{3}N_l^A)$.

Similarly, in an $(\alpha,\varepsilon,\gamma)$-inverse quasigroup, we have $\gamma\alpha \in N(\prescript{(1\,2)}{1}N_m^A) \cap N(\prescript{(1\,2)}{2}N_m^A)$, and in an $(\alpha,\beta,\varepsilon)$-inverse quasigroup, we have $ \alpha\beta \in N(\prescript{(2\,3)}{2}N_r^A) \cap N(\prescript{(2\,3)}{3}N_r^A)$.

It can be observed that if $(Q,\cdot)$ is a WIP-quasigroup with respect to the permutation $J$, then $J^2 \in N(\prescript{(1\,2)}{1}N_m^A) \cap N(\prescript{(1\,2)}{2}N_m^A)$ and if $(Q,\cdot)$ is a CI-quasigroup with respect to the permutation $J$, then $ J\in  N(\prescript{(1\,3)}{1}N_l^A) \cap N(\prescript{(1\,3)}{3}N_l^A)$ and $ J \in N(\prescript{(2\,3)}{2}N_r^A) \cap N(\prescript{(2\,3)}{3}N_r^A)$.

\subsection{$\boldsymbol{\lambda,\rho}$ and $\boldsymbol{\mu}$-inverse quasigroups} \label{subsec2}~\\
A quasigroup $(Q,\cdot)$ is a $\lambda$-\textit{inverse quasigroup} if there exist permutations
$\lambda_1,\lambda_2,\lambda_3$ of the set $Q$ such that 
\begin{equation} \label{inv 6}
\lambda_1 x \cdot \lambda_2(x \cdot y)= \lambda_3 y
\end{equation} 
for all $ x,y\in Q$ \cite{Belousov&87, Keedwell&Anthony}. From definition of autostrophism, (\ref{inv 6}) holds if and only if $\theta= ((2\,3),(\lambda_1,\lambda_2,\lambda_3))$ is an autostrophism of $(Q,\cdot)$. 

From Theorem \ref{nucleus_iso}, we conclude that in a $\lambda$-inverse quasigroup, we have $\prescript{(1\,3)}{}N_l^A= \theta^{-1} \prescript{(1\,2)}{}N_m^A \theta$, $\prescript{(2\,3)}{}N_r^A= \theta^{-1} \prescript{(2\,3)}{}N_r^A \theta$ and $\prescript{(1\,2)}{}N_m^A= \theta^{-1} \prescript{(1\,3)}{}N_l^A \theta$, which implies 
left $(1\,3)$-A-nucleus is isomorphic to middle $(1\,2)$-A-nucleus.
Also, $\prescript{(1\,3)}{}N_l^A= \theta^{-1} \prescript{(1\,2)}{}N_m^A \theta= (\theta^2)^{-1}\prescript{(1\,3)}{}N_l^A \theta^2$, which gives $\theta^2\in N(\prescript{(1\,3)}{}N_l^A)$, $\prescript{(2\,3)}{}N_r^A= \theta^{-1} \prescript{(2\,3)}{}N_r^A \theta$ gives $\theta\in N(\prescript{(2\,3)}{}N_r^A)$ and
$\prescript{(1\,2)}{}N_m^A= \theta^{-1} \prescript{(1\,3)}{}N_l^A \theta= (\theta^2)^{-1}\prescript{(1\,2)}{}N_m^A \theta^2$ implies $\theta^2 \in N(\prescript{(1\,2)}{}N_m^A)$. 
Thus in a $\lambda$-inverse quasigroup, $\theta\in N(\prescript{(2\,3)}{}N_r^A)$ and $\theta^2 \in N(\prescript{(1\,3)}{}N_l^A) \cap N(\prescript{(1\,2)}{}N_m^A)$.

From Table \ref{table5}, we have in a $\lambda$-inverse quasigroup $\prescript{(1\,3)}{1}N_l^A= \lambda_3^{-1} \prescript{(1\,2)}{1}N_m^A \lambda_1$, $\prescript{(1\,3)}{3}N_l^A= \lambda_1^{-1} \prescript{(1\,2)}{2}N_m^A \lambda_3$, $\prescript{(2\,3)}{2}N_r^A= \lambda_3^{-1} \prescript{(2\,3)}{3}N_r^A \lambda_2$, $\prescript{(2\,3)}{3}N_r^A= \lambda_2^{-1} \prescript{(2\,3)}{2}N_r^A \lambda_3$, $\prescript{(1\,2)}{1}N_m^A= \lambda_2^{-1} \prescript{(1\,3)}{1}N_l^A \lambda_1$, and $\prescript{(1\,2)}{2}N_m^A= \lambda_1^{-1} \prescript{(1\,3)}{3}N_l^A \lambda_2$.

A quasigroup $(Q,\cdot)$ has the \textit{left-inverse-property (LIP)} with respect to a permutation $\lambda$ of $Q$ if 
\begin{equation} \label{inv 7}
\lambda x \cdot x  y= y
\end{equation}
for all $x,y \in Q$ \cite{Belousov&67, Shcherbacov&2017}. Note that (\ref{inv 7}) is true if and only if $((2\,3),(\lambda,\varepsilon,\varepsilon))$ is an autostrophism of $(Q,\cdot)$.  Therefore, an LIP-quasigroup is a $\lambda$-inverse quasigroup with $\lambda_1=\lambda$ and  $\lambda_2=\lambda_3=\varepsilon$. 
Also from (\ref{inv 7}) we have $\lambda^2 x(\lambda x \cdot xy)=x y$, i.e., $\lambda^2 x \cdot y = x \cdot y$, which gives $\lambda^2=\varepsilon$. Hence in LIP-quasigroup $\prescript{(1\,3)}{1}N_l^A= \prescript{(1\,2)}{1}N_m^A \lambda$, $\prescript{(1\,3)}{3}N_l^A= \lambda \prescript{(1\,2)}{2}N_m^A$ and  $\prescript{(2\,3)}{2}N_r^A=\prescript{(2\,3)}{3}N_r^A $. 

A quasigroup $(Q,\cdot)$ is a $\rho$-\textit{inverse quasigroup} if there exist permutations
$\rho_1,\rho_2,\rho_3$ of the set $Q$ such that 
\begin{equation} \label{inv 8}
\rho_1 (x \cdot y) \cdot \rho_2 y= \rho_3 x
\end{equation}
for all $ x,y\in Q$ \cite{Belousov&87, Keedwell&Anthony}. From definition of autostrophism, (\ref{inv 8}) is true if and only if $\theta=((1\,3),(\rho_1,\rho_2,\rho_3))$ is an autostrophism of $(Q,\cdot)$. 

From Theorem \ref{nucleus_iso}, we conclude in a $\rho$-inverse quasigroup, we have $\prescript{(1\,3)}{}N_l^A= \theta^{-1} \prescript{(1\,3)}{}N_l^A \theta$, $\prescript{(2\,3)}{}N_r^A= \theta^{-1} \prescript{(1\,2)}{}N_m^A \theta$ and $\prescript{(1\,2)}{}N_m^A= \theta^{-1} \prescript{(2\,3)}{}N_r^A \theta$, which implies 
right $(2\,3)$-A-nucleus is isomorphic to middle $(1\,2)$-A-nucleus.
Also observe that in a $\rho$-inverse quasigroup $\theta\in N(\prescript{(1\,3)}{}N_l^A)$ and $\theta^2 \in N(\prescript{(2\,3)}{}N_r^A)\cap N(\prescript{(1\,2)}{}N_m^A)$.

From Table \ref{table5} we conclude in a $\rho$-inverse quasigroup, we have $\prescript{(1\,3)}{1}N_l^A= \rho_3^{-1} \prescript{(1\,3)}{3}N_l^A \rho_1$, $\prescript{(1\,3)}{3}N_l^A= \rho_1^{-1} \prescript{(1\,3)}{1}N_l^A \rho_3$, $\prescript{(2\,3)}{2}N_r^A= \rho_3^{-1} \prescript{(1\,2)}{2}N_m^A \rho_2$, $\prescript{(2\,3)}{3}N_r^A= \rho_2^{-1} \prescript{(1\,2)}{1}N_m^A \rho_3$, $\prescript{(1\,2)}{1}N_m^A= \rho_2^{-1} \prescript{(2\,3)}{3}N_r^A \rho_1$ and $\prescript{(1\,2)}{2}N_m^A= \rho_1^{-1} \prescript{(2\,3)}{2}N_r^A \rho_2$.

A quasigroup $(Q,\cdot)$ has the \textit{right inverse property (RIP)} with respect to a permutation $\rho$ of $Q$ if 
\begin{equation} \label{inv 9}
x  y \cdot \rho y = x
\end{equation}
for all $x,y \in Q$ \cite{Belousov&67, Shcherbacov&2017}. Note that (\ref{inv 9}) holds if and only if $((1\,3),(\varepsilon,\rho,\varepsilon))$ is an autostrophism of $(Q,\cdot)$. 
Therefore, an LIP-quasigroup is a $\rho$-inverse quasigroup with $\rho_2=\rho$ and  $\rho_1=\rho_3=\varepsilon$. Also from (\ref{inv 9}), $(x y \cdot \rho y) \rho^2 y=x y$, i.e., $x \cdot \rho^2 y=x \cdot y$ which implies $\rho^2=\varepsilon$.
Hence, in an RIP-quasigroup $\prescript{(1\,3)}{1}N_l^A= \prescript{(1\,3)}{3}N_l^A$,  $\prescript{(2\,3)}{2}N_r^A=\prescript{(1\,2)}{2}N_m^A  \rho$, and $\prescript{(1\,2)}{1}N_m^A=  \rho \prescript{(2\,3)}{3}N_r^A$. 

A quasigroup $(Q,\cdot)$ has the \textit{inverse property (IP)} (with respect to permutations $\lambda$ and $\rho$ of $Q$) if it is both LIP and RIP-inverse quasigroup \cite{Belousov&67}. Thus $\lambda^2=\varepsilon$ and $\rho^2=\varepsilon$. Hence in IP-quasigroup $\prescript{(1\,3)}{1}N_l^A= \prescript{(1\,3)}{3}N_l^A$,  $\prescript{(2\,3)}{2}N_r^A=\prescript{(2\,3)}{3}N_r^A $, and $\prescript{(1\,2)}{1}N_m^A \cong \prescript{(1\,2)}{2}N_m^A$. 

A quasigroup $(Q,\cdot)$ has the \textit{WCIP (weak commutative inverse property)} with respect to a permutation $J$ of $Q$ if 
\begin{equation} \label{inv 10}
J (x \cdot y) \cdot y= J x
\end{equation}
for all $x,y \in Q$ \cite{Shcherbacov&2017}. Note that (\ref{inv 10}) is true if and only if $((1\,3),(J,\varepsilon,J))$ is an autostrophism of $(Q,\cdot)$.  
Therefore, WCIP-quasigroup is a $\rho$-inverse quasigroup with $\rho_2=\varepsilon$,  $\rho_1=\rho_3=J$ and $J^2=\varepsilon$ (from Lemma 3.15 \cite{Shcherbacov&2017}). Hence in the WCIP-quasigroup, $\prescript{(1\,3)}{1}N_l^A= J \prescript{(1\,3)}{3}N_l^A J$ or $\prescript{(1\,3)}{1}N_l^A \cong \prescript{(1\,3)}{3}N_l^A $,  $\prescript{(2\,3)}{2}N_r^A=J \prescript{(1\,2)}{2}N_m^A $, and $\prescript{(1\,2)}{1}N_m^A=  \prescript{(2\,3)}{3}N_r^A J$. 

A quasigroup $(Q,\cdot)$ is a $\mu$-\textit{inverse quasigroup} if there exist permutations
$\mu_1,\mu_2,\mu_3$ of the set $Q$ such that 
\begin{equation}\label{inv 11}
\mu_1 y \cdot \mu_2 x= \mu_3 (x \cdot y)
\end{equation}
for all $ x,y\in Q$ \cite{Shcherbacov&2017}. It may be noted that (\ref{inv 11}) holds if and only if $((1\,2),(\mu_1,\mu_2,\mu_3)) \in Aus(Q,\cdot)$. 

From Theorem \ref{nucleus_iso}, we have in a $\mu$-inverse quasigroup $\prescript{(1\,3)}{}N_l^A= \theta^{-1} \prescript{(2\,3)}{}N_r^A \theta$, $\prescript{(2\,3)}{}N_r^A= \theta^{-1} \prescript{(1\,3)}{}N_l^A \theta$ and $\prescript{(1\,2)}{}N_m^A= \theta^{-1} \prescript{(1\,2)}{}N_m^A \theta$, which implies 
left $(1\,3)$-A-nucleus is isomorphic to right $(2\,3)$-A-nucleus.
Also observe that, $\theta^2 \in N(\prescript{(1\,3)}{}N_l^A) \cap N(\prescript{(2\,3)}{}N_r^A)$ and $\theta \in N(\prescript{(1\,2)}{}N_m^A)$.

From Table \ref{table5}, we conclude that in a $\mu$-inverse quasigroup, we have $\prescript{(1\,3)}{1}N_l^A= \mu_3^{-1} \prescript{(2\,3)}{2}N_r^A \mu_1$, $\prescript{(1\,3)}{3}N_l^A= \mu_1^{-1} \prescript{(2\,3)}{3}N_r^A \mu_3$, $\prescript{(2\,3)}{2}N_r^A= \mu_3^{-1} \prescript{(1\,3)}{1}N_l^A \mu_2$, $\prescript{(2\,3)}{3}N_r^A= \mu_2^{-1} \prescript{(1\,3)}{3}N_l^A \mu_3$, $\prescript{(1\,2)}{1}N_m^A= \mu_2^{-1} \prescript{(1\,2)}{2}N_m^A \mu_1$, and $\prescript{(1\,2)}{2}N_m^A= \mu_1^{-1} \prescript{(1\,2)}{1}N_m^A \mu_2$.

\section{Conclusions}
In this paper, the characterization of the inverse sets of $\sigma$-A-nuclei and products of $\sigma$-A-nuclei and $\tau$-A-nuclei of a quasigroup, and their respective component sets have been discussed (where $\sigma, \tau \in S_3$).
Further, connections between the components of the $\sigma$-A-nuclei of a quasigroup and components of the $\sigma$-A-nuclei of its isostrophic images have been derived. Finally, we have investigated properties of $\sigma$-A-nuclei of various inverse quasigroups using the derived connections.

This study may further be carried out to obtain more relationships on $\sigma$-A-nuclei of a quasigroup. 
Also, the properties of the $\sigma$-A-nuclei may further be explored for various other quasigroup classes available in the literature.

\section*{Acknowledgement}
The research of the first author is supported by University Grants Commission (UGC), with reference no.: 20/12/2015(ii)EU-V.



\begin{thebibliography}{00}
	
	\bibitem{Belousov&58}
	V. D. Belousov, Regular permutations in quasigroups,{\it Uchenye zapiski Bel'tskogo
		pedinstituta}, {\bf 1}:39--49, 1958 (in Russian).
	
	\bibitem{Belousov&67}
	V. D. Belousov, {\it Foundations of the theory of quasigroups and loops}, Izdat. ``Nauka'', Moscow, 1967, 223 pp. (in Russian).
	
	\bibitem{Belousov&72}
	V. D. Belousov, n-Ary Quasigroups. {\it Kishinev}, Shtiintsa, 1972, 227 pp. (in Russian).
	
	\bibitem{Belousov}
	V. D. Belousov, {\it Elements of Quasigroup Theory: A special course}. Kishinev State University Printing House, Kishinev, 1981.
	
	\bibitem{Belousov&87}
	V. D. Belousov, Inverse loops, {\it Mat. Issled.} No. 95, Kvazigruppy (1987) 3--22, 130 (in Russian).
	
	\bibitem{Chauhan}
	D. Chauhan, I. Gupta, R. Verma, Quasigroups and their applications in cryptography, {\it Cryptologia} {\bf 45}(3) (2020) 227--265.
	
	\bibitem{Chauhan22}
	D. Chauhan, I. Gupta, R. Verma, On some properties of A-nuclei and $\sigma$-A-nuclei of a quasigroup, {\it Journal of Algebra and Its Applications}, doi: 10.1142/S0219498823501578.
	
	\bibitem{Garrison}
	G. N. Garrison, Quasi-groups, {\it Ann. of Math.} {\bf 41}(2) (1940) 474--487.

	\bibitem{Keedwell&1999}
	A. D. Keedwell, Crossed-inverse quasigroups with long inverse cycles and applications to cryptography, {\it Australas. J. Combin.} {\bf 20} (1999) 241--250.
	
	\bibitem{Keedwell&2002}
	A. D. Keedwell, V. A. Shcherbacov, On $m$-inverse loops and quasigroups with a long inverse cycle, {\it Australas. J. Combin.} {\bf 26} (2002) 99--119. 
	
	\bibitem{Keedwell&Anthony}
	A. D. Keedwell, V. A. Shcherbacov, Quasigroups with an inverse property and generalized parastrophic identities, {\it Quasigroups Related Systems}, {\bf 13}(1) (2005) 109--124.
	
	\bibitem{Keedwell&Shcherbacov&2003}
	A. D. Keedwell, V. A. Shcherbacov, Construction and properties of $(r,s,t)$-inverse quasigroups. I, {\it Discrete Math.} {\bf 266} (2003) no.~1-3, 275--291.
	
	\bibitem{Keedwell&Shcherbacov&2004}
	A. D. Keedwell, V. A. Shcherbacov, Construction and properties of $(r,s,t)$-inverse quasigroups. II, {\it Discrete Math.} {\bf 288} (2004) no.~1-3, 61--71.
	
	\bibitem{Kepka}
	T. Kepka, Regular mappings of groupoids, {\it Acta Univ. Carolin. Math. Phys.} {\bf 12}(1) (1971) 25--37. 
	
	
	\bibitem{Lyakh}
	I.V. Lyakh {\it On transformations of orthogonal systems of operations and algebraic nets}. Ph.D. Dissertation, Kishinev, Institute of Mathematics, 1986, p. 108 pages (in Russian).
	
	
	\bibitem{Pflugfelder}
	H. O. Pflugfelder, Quasigroups and loops: introduction, {\it Sigma Series in Pure Mathematics} {\bf 7}. Heldermann Verlag, Berlin, 1990. {\rm viii}+147 pp. 
	
	\bibitem{Shcherbacov&2003}  
	V.A. Shcherbacov, {\it Elements of quasigroup theory and some its applications in code theory and cryptology} (2003), https://www2.karlin.mff.cuni.cz/~drapal/speccurs.pdf.
	
	\bibitem{Shcherbacov&2009} 
	V.A. Shcherbacov, On the structure of left and right F-, SM-, and E-quasigroups, {\it J. Gen. Lie Theory Appl.} {\bf 3}(3) (2009) 197--259, doi: 10.4303/jglta/S090306.
	
	\bibitem{Shcherbacov&2011}  
	V.A. Shcherbacov, {\it A-nuclei and A-centers of a quasigroup}, preprint (2011), arXiv:1102.3525.
	
	
	\bibitem{Shcherbacov&2017}  
	V. Shcherbacov, {\it Elements of quasigroup theory and applications}, Monographs and Research Notes in Mathematics, CRC Press, Boca Raton, FL, 2017. xxi+576 pp. doi:10.1201/9781315120058.
	
	
\end{thebibliography}
\end{document}